\def\?[#1]{\textbf{[#1]}\marginpar{\Large{\textbf{??}}}}
\newtheorem{thm}{Theorem}[section]
\newtheorem{prop}{Proposition}[section]
\newtheorem{cor}[prop]{Corollary}
\numberwithin{equation}{section}
\DeclareMathOperator{\supp}{supp}
\renewcommand\Re{\mathrm{Re}\,  }
\newcommand{\R}{\mathbb{R}}
\newcommand{\C}{\mathbb{C}}
\newcommand\sskkip[1]{}
\renewcommand{\Re}{\hbox{Re}\,}
\renewcommand{\Im}{\hbox{Im}\,}
\newcommand{\p}{\partial}
 \title[Stability estimates  in the high frequency limit]{Stability estimates for partial data inverse problems for Schr\"odinger operators in the high frequency limit}
\author[Krupchyk]{Katya Krupchyk}
\address
        {K. Krupchyk, Department of Mathematics\\
University of California, Irvine\\ 
CA 92697-3875, USA }
\email{katya.krupchyk@uci.edu}
\author[Uhlmann]{Gunther Uhlmann}
\address
       {G. Uhlmann, Department of Mathematics\\
       University of Washington\\
       Seattle, WA  98195-4350\\
       USA\\
       and Institute for Advanced Study of the Hong Kong University of Science and Technology}
\email{gunther@math.washington.edu}
\begin{document}


\begin{abstract} 

We consider the partial data inverse boundary problem for the Schr\"odin\-ger operator at a frequency $k>0$ on a bounded domain in $\R^n$, $n\ge 3$, with impedance boundary conditions.  Assuming that the potential is known in a neighborhood of the boundary, we first show that the knowledge of the partial Robin--to--Dirichlet map  at the fixed frequency $k>0$ along an arbitrarily small portion of the boundary,  determines the potential in a logarithmically stable way. We prove, as the principal result of this work, that the logarithmic stability can be improved to the one of H\"older type in the high frequency regime.  

\end{abstract}

\maketitle

\textbf{R\'esum\'e.} Nous consid\'erons un probl\`eme inverse avec des donn\'ees partielles pour l'op\'erateur de Schr\"odinger \`a la fr\'equence $k>0$ sur un domaine  born\'e dans $\R^n$, $n\geq 3$, avec des conditions aux limites d'imp\'edance. En supposant que le potential soit connu sur un voisinage du bord, nous montrons d'abord que la connaissance de l'op\'erateur Robin-Dirichet partiel \`a la fr\'equence fix\'ee $k>0$ sur des sous-ensembles
arbitrairement petits du bord, d\'etermine le potentiel de mani\`ere logarithmiquement stable. Nous montrons, comme le r\'esultat principal de ce travail, que la stabilit\'e logarithmique peut \^etre am\'elior\'ee et remplac\'ee par une estimation de stabilit\'e de type H\"old\'erienne dans le r\'egime des hautes fr\'equences.

\section{Introduction and statement of results}
Let $\Omega\subset \R^n$, $n\ge 3$, be a bounded connected open set with $C^\infty$ boundary,  let $k>0$, and $q\in L^\infty(\Omega;\R)$. For any  $f\in H^{-\frac{1}{2}}(\p \Omega)$,  the  interior impedance problem 
\begin{equation}
\label{eq_int_1}
\begin{aligned}
&(-\Delta-k^2+q)u=0\quad \text{in}\quad \Omega,\\
&(\p_\nu-i k)u=f\quad \text{on}\quad \p \Omega,
\end{aligned}
\end{equation}
has a unique solution $u\in H^{1}(\Omega)$, see Proposition \ref{prop_solvability_direct}.  Here $\nu$ is the inner unit normal to $\p \Omega$.  Associated to the boundary problem \eqref{eq_int_1}, is the Robin--to--Dirichlet map defined by
\begin{equation}
\label{eq_int_Robin_to_Dir}
\Lambda_q(k): H^{-\frac{1}{2}}(\p \Omega)\to H^{\frac{1}{2}}(\p \Omega), \quad f\mapsto u|_{\p \Omega}.
\end{equation}
Thanks to Proposition \ref{prop_regularity_RtD}, we have 
\[
\Lambda_q(k): L^2(\p \Omega)\to H^{1}(\p\Omega).
\]
Let $\Gamma\subset \p \Omega$ be an arbitrary non-empty open subset of the boundary of $\Omega$, and let us define the corresponding partial Robin--to--Dirichlet map, 
\begin{equation}
\label{eq_int_mapping_prop}
\Lambda_q^\Gamma(k)=\tau_\Gamma\circ \Lambda_q(k) : L^2(\p \Omega)\to H^{1}(\Gamma),
\end{equation} 
where $\tau_\Gamma$ is the restriction map to $\Gamma$.  

The inverse boundary problem  with partial data that we are interested in is that of determining the potential $q$ in $\Omega$ from the knowledge of the partial Robin--to--Dirichlet map $\Lambda_q^\Gamma(k)$ at a fixed frequency $k> 0$.  
This problem has traditionally been studied  when the map $\Lambda_q^{\Gamma}(k)$ is replaced by the Dirichlet--to--Neumann map $u|_{\p \Omega}\mapsto \p_\nu u|_{\Gamma}$.  In the full data case when $\Gamma=\p \Omega$, the global uniqueness for this problem was established by Sylvester--Uhlmann in \cite{Syl_Uhl}.  Important progress on the partial data problem has been achieved by Ammari--Uhlmann  \cite{Ammari_Uhlmann}, Isakov \cite{Isakov_2007}, Bukhgeim--Uhlmann \cite{BukhUhl_2002}, Kenig--Sj\"ostrand--Uhlmann \cite{KenSjUhl2007},  Kenig--Salo \cite{Kenig_Salo_2013},  while the case when $\Gamma$ is arbitrary remains quite open, see \cite{Kenig_Salo_servey} for a review.  

Turning the attention to the stability aspects of this inverse problem, still in the case of the Dirichlet--to--Neumann map,  
a logarithmic stability estimate for the full data problem was established by Alessandrini  \cite{Aless_1}.  It has subsequently been shown by Mandache \cite{Mandache} that the logarithmic stability estimate is optimal.  In the case of partial data, logarithmic stability estimates complementing the uniqueness result of  Ammari--Uhlmann \cite{Ammari_Uhlmann}, were proved in \cite{Fathallah_2007},  see also \cite{Ben_Joud_2009},  \cite{Alessandrini_Kim_2012} and \cite{Ruland_Salo_2017}, while for the uniqueness result of Isakov \cite{Isakov_2007}, logarithmic stability estimates were obtained in  \cite{Heck_Wang_2016}. For the uniqueness results of Bukhgeim--Uhlmann \cite{BukhUhl_2002} and  Kenig--Sj\"ostrand--Uhlmann \cite{KenSjUhl2007},  the $\log-\log$ type stability estimates were established in \cite{Heck_Wang_2006}, and  in \cite{Caro_Dos_Santos_Ruiz}, \cite{Caro_Dos_Santos_Ruiz_2014}, respectively. 

The logarithmic stability estimates above indicate that the inverse problems considered are severely ill-posed, making it impossible to design reconstruction algorithms with high resolution in practice, since small errors in the boundary measurements will result in large errors in the reconstruction of the potential in the interior. Now it has been observed numerically that the stability may increase when the frequency $k$ of the problem becomes large \cite{Colton_Haddar}. The phenomenon of increasing stability in the large frequency regime, for several fundamental inverse problems, has been studied rigorously in \cite{Hry_Isakov}, \cite{Isakov_2007_inres} \cite{Isakov_2011}, \cite{Isakov_Wang_2014}, \cite{INUW_2014}, \cite{Isakov_Lai_Wang_2016}, among others. In particular, for the full data problem, assuming that the potential is sufficiently regular and is known near the boundary of $\Omega$, it was shown by Isakov   \cite{Isakov_2011} that  in the high frequency regime, the stability improves from a logarithmic one to the one of Lipschitz type, see also \cite{INUW_2014}. 

In the case of partial data inverse problems, the question of increasing stability at large frequencies has only been studied for the uniqueness result of Isakov \cite{Isakov_2007} in \cite{Heck_incres_stab_2017} and \cite{Liang_2015}, to the best of our knowledge. 

The goal of this paper is to study the issue of increasing stability in the uniqueness result of Ammari--Uhlmann \cite{Ammari_Uhlmann}, which establishes that under the assumption that the potential is known near the boundary, the knowledge of the  Dirichlet--to--Neumann map measured on an arbitrarily small portion $\Gamma$ of the boundary determines the potential in $\Omega$ uniquely.  In doing so we impose Robin boundary conditions, in order to have a unique solvability of the corresponding boundary problem \eqref{eq_int_1} for all $k>0$, whereas when working with Dirichlet boundary conditions, say, one has to assume that $k^2$ is not a Dirichlet eigenvalue of $-\Delta+q$. The latter requirement becomes impractical if one wishes to take the high frequency limit $k\to +\infty$, as we shall do in this paper. Let us also remark that the Robin boundary conditions are both natural and important, as they approximate Sommerfeld radiation conditions at high frequencies,  see \cite{Ammari_Dos_Santos_2013}, \cite{BaSpWu_2016}. They are also of interest to numerical analysts and play a fundamental role in the theory of integral equations for exterior problems, see \cite{BaSpWu_2016}.

Let us first state the following logarithmic stability estimate for the partial data inverse problem in the case of the Robin--to--Dirichlet map at a fixed frequency $k_0>0$. 
\begin{thm}
\label{thm_main_0}
Let $\Omega\subset \R^n$, $n\ge 3$, be a bounded connected open set with $C^\infty$ boundary, and  let $\Gamma\subset \p \Omega$ be an arbitrary non-empty open subset of the boundary of $\Omega$. Let $k_0> 0$ be fixed, let $M>0$ and let  $q_1,q_2\in L^\infty(\Omega;\R)$ be such that  $\|q_j\|_{L^\infty(\Omega)}\le M$.  Assume that  $q_1=q_2$ in  $\omega_0$, where  $\omega_0\subset \Omega$ is  a  neighborhood of  $\p \Omega$. Then there exists $C>0$ such that for $0< \delta:=\| \Lambda_{q_1}^\Gamma(k_0)-\Lambda_{q_2}^\Gamma(k_0)\|_{L^2(\p \Omega)\to H^{1}(\Gamma)}<1/e$, we have
\[
\|q_1-q_2\|_{H^{-1}(\Omega)}\le C  |\log \delta|^{-\frac{2}{n+2}}.
\]
Here $C>0$  depends on $k_0$,  $\Omega$, $\omega_0$, and  $M$. 
\end{thm}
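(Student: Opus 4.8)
The strategy follows the well-trodden route for logarithmic stability of partial-data Calderón-type problems, adapted to the Robin-to-Dirichlet setting. The plan is to reduce the statement to an estimate on the Fourier transform of the difference $q:=q_1-q_2$, which we extend by zero outside $\Omega$ to an $L^\infty$ function on $\R^n$ (this uses the hypothesis $q_1=q_2$ near $\p\Omega$, so that $q$ is supported away from the boundary). The mechanism is the construction of complex geometric optics (CGO) solutions. First I would establish an integral identity: for $u_1$ a solution of $(-\Delta-k_0^2+q_1)u_1=0$ in $\Omega$ and $u_2$ a solution of the adjoint-type equation $(-\Delta-k_0^2+q_2)u_2=0$, one has
\[
\int_\Omega q\, u_1 u_2\, dx = \langle (\Lambda_{q_1}(k_0)-\Lambda_{q_2}(k_0)) f_1, (\p_\nu - ik_0)\bar u_2\rangle_{\p\Omega},
\]
or a variant thereof, where $f_1=(\p_\nu-ik_0)u_1|_{\p\Omega}$. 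The partial data character is handled à la Ammari–Uhlmann: using Green's formula and the fact that $q$ vanishes near $\p\Omega$, plus a Runge-type or reflection argument, the boundary pairing can be localized to $\Gamma$ up to a controllable error, yielding $|\int_\Omega q\, u_1 u_2\, dx|\le C\delta\, \|f_1\|\,\|f_2\|$.

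The second step is to insert CGO solutions of the form $u_j = e^{x\cdot\zeta_j}(1+r_j)$ with $\zeta_1,\zeta_2\in\C^n$, $\zeta_j\cdot\zeta_j = k_0^2$, $\zeta_1+\zeta_2 = i\xi$ for a prescribed frequency $\xi\in\R^n$, and $|\zeta_j|\sim\rho$ a large parameter, with the remainders satisfying $\|r_j\|_{L^2(\Omega)}\le C/\rho$ by the standard Sylvester–Uhlmann / Carleman estimate for $\Delta + 2\zeta_j\cdot\nabla$. Substituting into the identity gives
\[
|\widehat q(\xi)| \le C\Big(\delta\, e^{C\rho} + \frac{1}{\rho}\Big),
\]
valid for $|\xi|\le c\rho$, where the $e^{C\rho}$ comes from bounding the boundary norms $\|f_j\|$ of the exponentially growing CGO data. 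The low-frequency part of $\|q\|_{H^{-1}}^2 = \int \langle\xi\rangle^{-2}|\widehat q(\xi)|^2 d\xi$ is then controlled by splitting $\{|\xi|\le c\rho\}$ and $\{|\xi|>c\rho\}$: on the first region use the pointwise bound, on the second use the a priori bound $\|q\|_{L^\infty}\le 2M$ (hence $\|q\|_{L^2(\Omega)}\le C$) to get a tail of size $O(\rho^{-2})$ after one derivative gain — actually one uses that $\int_{|\xi|>c\rho}\langle\xi\rangle^{-2}|\widehat q|^2\le C\rho^{-2}\|q\|_{L^2}^2$. Optimizing in $\rho$ the resulting bound $\|q\|_{H^{-1}}^2 \le C(\rho^n e^{C\rho}\delta^2 + \rho^{-2})$: choosing $\rho \sim c|\log\delta|$ makes the first term $\lesssim \delta^{2-o(1)}$, which is dominated, so $\|q\|_{H^{-1}}^2 \lesssim |\log\delta|^{-2}$; the stated exponent $\frac{2}{n+2}$ (rather than $1$) arises from being slightly more careful — interpolating the $H^{-1}$ norm against the $L^2$ bound with the correct powers, or equivalently choosing $\rho$ so as to balance $\rho^n e^{C\rho}\delta^2$ against $\rho^{-2}$ keeping track of the polynomial loss $\rho^n$, which forces $\rho\sim c|\log\delta|$ and then distributes the loss through the interpolation exponent. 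I would carry out this optimization explicitly at the end.

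The main obstacle I anticipate is the partial-data localization: reducing the full boundary integral identity to one involving only $\Gamma$, while keeping the exponential dependence on $\rho$ under control and not introducing an additional loss that would degrade the logarithmic rate. In the fixed-frequency Ammari–Uhlmann framework this is done by an analytic continuation / reflection argument exploiting that $q$ vanishes near $\p\Omega$ together with unique continuation, and the quantitative (stability) version of this requires a three-spheres inequality or a quantitative Runge approximation, each of which contributes its own logarithmic weight. Tracking these weights carefully is what produces the precise exponent $\frac{2}{n+2}$, and getting that bookkeeping right — rather than the CGO construction, which is standard — is the delicate part. A secondary technical point is verifying the mapping properties $\Lambda_q(k_0):L^2(\p\Omega)\to H^1(\p\Omega)$ and the solvability of \eqref{eq_int_1} uniformly in the relevant data, but these are supplied by Propositions \ref{prop_solvability_direct} and \ref{prop_regularity_RtD}.
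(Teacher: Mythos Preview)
Your overall architecture is right and matches the paper's, but note that the paper does not prove Theorem~\ref{thm_main_0} directly: it is deduced from Theorem~\ref{thm_main} (see Remark~3), so the actual argument you should compare against is the proof in Section~\ref{sec_proof_thm_main}, specialized to a fixed $k$.

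There are two genuine gaps in your sketch. First, the partial-data localization is not carried out ``\`a la Ammari--Uhlmann'' via reflection/Runge; the paper follows the approach of Ben~Joud. One solves the impedance problem for $(-\Delta-k_0^2+q_1)$ with Robin data $(\p_\nu-ik_0)u_2|_{\p\Omega}$, sets $u=v-u_2$ (so $(\p_\nu-ik_0)u=0$ on $\p\Omega$), introduces a cutoff $\chi$ equal to $1$ on $\supp(q_1-q_2)$ and $0$ near $\p\Omega$, and obtains $\int_\Omega (q_1-q_2)u_1u_2\,dx=-\int_\Omega u_1[-\Delta,\chi]u\,dx$, with the commutator supported in a shell $\omega_2\setminus\overline{\omega_3}$. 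The whole difficulty is then to estimate $\|u\|_{H^1(\omega_2\setminus\overline{\omega_3})}$, and this is done via a \emph{boundary Carleman estimate} (Corollary~\ref{cor_BLR}, Proposition~\ref{prop_based_on_Carleman_est}), yielding
\[
\|u\|_{H^1(\omega_2\setminus\overline{\omega_3})}\le C\big(e^{-\alpha_1/h}\|u\|_{H^1(\Omega)}+e^{\alpha_2/h}\|u|_{\p\Omega}\|_{H^1(\Gamma)}\big).
\]
Crucially, the Carleman parameter $h$ is then \emph{tied to the CGO parameter} by $1/h=\gamma a$; this is what kills the first term ($e^{-\alpha_1/h}$ becomes $e^{-\alpha_3 a}\le 1/a$) and leaves the clean Fourier bound $|\widehat{(q_1-q_2)}(\xi)|\le C(e^{\alpha_4 a}\delta+1/a)$ of \eqref{eq_sec4_17}. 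Your proposal does not identify this coupling; a bare three-spheres or Runge step would leave an unlinked parameter you cannot optimize away.

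Second, your final optimization is incorrect as written. You use the same $\rho$ for both $|\Im\zeta_j|$ and the Fourier cutoff radius; but then the $1/\rho$ from the CGO remainders, integrated over $\{|\xi|\le c\rho\}$, produces a term $\rho^{n}\cdot\rho^{-2}=\rho^{n-2}$ which blows up for $n\ge 3$. Your displayed bound $\rho^n e^{C\rho}\delta^2+\rho^{-2}$ simply drops this term. The paper uses \emph{two} scales: with CGO parameter $a$ fixed, choose a Fourier cutoff $\rho\le 2\sqrt{k_0^2+a^2}$ to get $\|q_1-q_2\|_{H^{-1}}^2\le C\rho^n(e^{2\alpha_4 a}\delta^2+a^{-2})+C\rho^{-2}$; setting $\rho=a^{2/(n+2)}$ balances $\rho^n a^{-2}$ against $\rho^{-2}$ (both equal $a^{-4/(n+2)}$), and then $a\sim|\log\delta|$ gives the exponent $\tfrac{2}{n+2}$. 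This two-parameter balancing is exactly where $\tfrac{2}{n+2}$ comes from, not from an $H^{-1}$--$L^2$ interpolation.
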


The following is the main result of this paper, showing that the stability of the partial data inverse boundary problem increases as $k$ becomes large.   
\begin{thm}
\label{thm_main} 
Let $\Omega\subset \R^n$, $n\ge 3$, be a bounded connected open set with $C^\infty$ boundary, and  let $\Gamma\subset \p \Omega$ be an arbitrary non-empty open subset of the boundary of $\Omega$. Let $M>0$ and let  $q_1,q_2\in L^\infty(\Omega;\R)$ be such that  $\|q_j\|_{L^\infty(\Omega)}\le M$. Assume that  $q_1=q_2$ in  $\omega_0$, where  $\omega_0\subset \Omega$ is  a  neighborhood of  $\p \Omega$.   Then there exists a constant $C>0$  such that for all $k\ge 1$ and $0< \delta:=\| \Lambda_{q_1}^\Gamma(k)-\Lambda_{q_2}^\Gamma(k)\|_{L^2(\p \Omega)\to H^{1}(\Gamma)}<1/e$, we have 
\begin{equation}
\label{eq_thm_main}
\|q_1-q_2\|_{H^{-1}(\Omega)}\le e^{Ck}\delta^{\frac{1}{2}}+\frac{C}{(k+\log\frac{1}{\delta})^{\frac{2}{n+2}}}.
\end{equation}
Here $C>0$  depends on $\Omega$, $\omega_0$, $M$ but independent of $k$.
\end{thm}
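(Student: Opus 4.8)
The plan is to combine the classical complex-geometrical-optics (CGO) machinery with a careful tracking of the frequency parameter $k$. The starting point is the standard integral identity: for the difference $q:=q_1-q_2$, which vanishes in $\omega_0$, and for solutions $u_1$ of $(-\Delta-k^2+q_1)u_1=0$ and $u_2$ of $(-\Delta-k^2+q_2)\bar u_2=0$ with appropriate Robin data, one has $\int_\Omega q\, u_1 u_2 \,dx$ controlled by the boundary measurement $\delta$ times norms of the Cauchy data of $u_1,u_2$. Because of the partial data restriction to $\Gamma$, this identity must be run through the Carleman-estimate/reflection argument of Ammari--Uhlmann combined with boundary control, so that only $\|\cdot\|_{H^1(\Gamma)}$ of the trace enters; I would quote the solvability and regularity results (Propositions \ref{prop_solvability_direct}, \ref{prop_regularity_RtD}) and the partial-data construction verbatim, being careful to record how the constants depend on $k$. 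First, I would fix CGO solutions of the form $u_j = e^{\rho_j\cdot x}(1+r_j)$ with $\rho_1+\rho_2 = i\xi$, $\rho_j\cdot\rho_j = -k^2$, so that $|\rho_j|\sim \sqrt{\tau^2+k^2}$ for a large parameter $\tau$; the correction terms $r_j$ satisfy $\|r_j\|_{L^2}\lesssim \|q_j\|_{L^\infty}/|\rho_j| \lesssim M/\sqrt{\tau^2+k^2}$, uniformly in $k$.

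The second step is the frequency-explicit estimate of the Fourier transform $\widehat{q}(\xi)$. Plugging the CGO solutions into the integral identity yields, for $|\xi|\le \tau$ (say),
\[
|\widehat q(\xi)| \le C e^{C\sqrt{\tau^2+k^2}}\,\delta + \frac{C M}{\sqrt{\tau^2+k^2}},
\]
where the exponential comes from bounding the boundary traces $\|u_j|_{\p\Omega}\|$ by $e^{C|\rho_j|}$, and the second term is the usual CGO remainder contribution $\big|\int q\,(r_1+r_2+r_1r_2)\big|$. The key point is that the exponent in the first term is $\sqrt{\tau^2+k^2}\le \tau + k$, so the exponential factor splits as $e^{C\tau}e^{Ck}$; likewise the gain $1/\sqrt{\tau^2+k^2}$ benefits directly from large $k$ even when $\tau$ is moderate. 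This is exactly the mechanism by which increasing $k$ improves stability: a large $k$ both enlarges the effective frequency radius at which $\widehat q$ is controlled and shrinks the remainder, at the cost of only an $e^{Ck}$ prefactor on the (exponentially small, if $\delta$ is small) data term.

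The third step is the optimization/splitting of the $H^{-1}$ norm. Writing
\[
\|q\|_{H^{-1}(\Omega)}^2 \le \int_{|\xi|\le R}\frac{|\widehat q(\xi)|^2}{1+|\xi|^2}\,d\xi + \int_{|\xi|> R}\frac{|\widehat q(\xi)|^2}{1+|\xi|^2}\,d\xi,
\]
the high-frequency tail is bounded by $C M^2/R^2$ using $\|q\|_{L^\infty}\le M$ (here one uses that $q$ is compactly supported, so $\widehat q\in L^\infty$ with bound $\lesssim M$), while on the low-frequency ball one inserts the estimate from step two with $\tau\sim R$, giving a contribution $\lesssim R^n\big(e^{CR}e^{Ck}\delta + M/(R+k)\big)^2$. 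Choosing $R$ comparable to a small multiple of $\log\frac1\delta$ makes $e^{CR}\delta^{1/2}$ manageable and balances it against the $e^{Ck}$; one is led to take $R \sim \varepsilon(k+\log\frac1\delta)$ for a suitably small absolute constant $\varepsilon$, which yields a low-frequency contribution bounded by $e^{Ck}\delta^{1/2}$ plus $C/(k+\log\frac1\delta)$, and a tail bounded by $C/(k+\log\frac1\delta)^2$; combining and relabeling constants gives \eqref{eq_thm_main}, since the dominant tail term behaves like $(k+\log\frac1\delta)^{-2/(n+2)}$ after the correct balancing of the $R^n$ growth against $R^{-1}$ (the exponent $\frac{2}{n+2}$ is precisely what the optimization $R^n\cdot R^{-2}$ versus $R^{-2}$ produces, matching the fixed-frequency Theorem \ref{thm_main_0}).

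I expect the main obstacle to be making every constant genuinely independent of $k$: in particular, the a priori bound on $\|u_j|_{\p\Omega}\|_{H^{1/2}}$ or $\|u_j\|_{H^1(\Omega)}$ in terms of $e^{C|\rho_j|}$ must be proved with a $k$-uniform constant, which requires a careful reading of the energy estimates for the impedance problem (the $-k^2$ term must not be allowed to spoil coercivity — this is exactly where the Robin sign condition is used), and the partial-data Carleman estimate must be set up with a limiting Carleman weight whose associated constants do not degenerate as $k\to\infty$. A secondary technical point is justifying that the CGO remainder bound $\|r_j\|\lesssim M/\sqrt{\tau^2+k^2}$ holds uniformly in $k$, which should follow from the standard Sylvester--Uhlmann/Hähner estimate applied to the operator $e^{-\rho\cdot x}(-\Delta-k^2)e^{\rho\cdot x}$ once one notes its symbol is $-|\xi|^2 + 2i\rho\cdot\xi$, whose lower bound on the relevant region is governed by $|\rho|\sim\sqrt{\tau^2+k^2}$ regardless of how the size is split between $\tau$ and $k$.
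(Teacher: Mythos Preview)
Your outline of the CGO and optimization parts (steps two and three) matches the paper's argument closely, and the form of the Fourier estimate and the final balancing that produces the exponent $2/(n+2)$ are essentially correct. However, step one---the partial-data reduction---is where the real content lies, and your description of it contains genuine gaps.

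First, the mechanism you sketch (``Carleman-estimate/reflection argument of Ammari--Uhlmann combined with boundary control'') is not what is used here, and in particular no \emph{limiting} Carleman weight enters. The paper's route is: solve the impedance problem for $q_1$ with data $(\p_\nu-ik)u_2$, set $u=v-u_2$, cut off with $\chi$ vanishing near $\p\Omega$, and integrate against a CGO solution $u_1$. This produces the identity
\[
\int_\Omega (q_1-q_2)u_1u_2\,dx = -\int_\Omega [-\Delta,\chi]u\cdot u_1\,dx,
\]
and the commutator is supported in a collar $\omega_2\setminus\overline{\omega_3}$. The entire partial-data input is then a \emph{quantitative unique continuation estimate} (Proposition~\ref{prop_based_on_Carleman_est}):
\[
\|u\|_{H^1(\omega_2\setminus\overline{\omega_3})}\le C\big(e^{-\alpha_1/h}\|u\|_{H^1(\Omega)}+e^{\alpha_2/h}\|u|_\Gamma\|_{H^1(\Gamma)}\big),
\]
valid for $0<h\le h_0/k$. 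This is proved from a boundary Carleman estimate with Fursikov--Imanuvilov weight $\varphi=e^{\beta_0\psi}$ (Corollary~\ref{cor_BLR}), glued with a local Lebeau--Robbiano/Buffe estimate near $\p\Omega\setminus\Gamma$ that absorbs the Robin data; the Robin term forces the two-parameter regime $h\ll 1/k$. You do not mention this second small parameter, and it is essential.

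Second, your attribution of the exponential factor on $\delta$ solely to ``bounding the boundary traces $\|u_j|_{\p\Omega}\|$ by $e^{C|\rho_j|}$'' is incorrect. In the paper the exponential is $e^{\alpha_4 a}$ with $a=|\Im\zeta|$, and it arises from \emph{two} sources: the CGO size $e^{2aR}$ \emph{and} the Carleman factor $e^{\alpha_2/h}$ from the estimate above. These are tied together by the choice $1/h=\gamma a$ with $\gamma$ large enough that the companion term $e^{-\alpha_1/h}e^{2aR}$ genuinely decays (this is what kills the $\|u\|_{H^1(\Omega)}$ contribution). The constraint $h\le h_0/k$ then forces $a\gtrsim k$, which is precisely why the final choice is $a\sim k+\log(1/\delta)$ and why the prefactor is $e^{Ck}$.

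Third, the $k$-uniform bound $\|u\|_{H^1(\Omega)}\le C\|u_2\|_{L^2(\Omega)}$ that you need (so that the $e^{-\alpha_1/h}\|u\|_{H^1}$ term can be absorbed) is not a simple coercivity consequence of the Robin sign; it comes from the sharp high-frequency interior impedance estimate of Baskin--Spence--Wunsch (Theorem~\ref{thm_Baskin}), which you should invoke explicitly. Without it your constants would grow in $k$ and the argument would not close.
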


\textbf{Remark 1}. The result of Theorem \ref{thm_main} can be summarized informally by saying that at high frequencies, the stability estimate is of H\"older type, modulo an error term with a power-like decay as $k$ becomes large.  

\textbf{Remark 2}. 
As is often the case for partial data inverse boundary problems, a fundamental role in the proof of Theorem \ref{thm_main} is played by Carleman estimates, here exploited to suppress the contribution of the boundary region away from $\Gamma$. The price to pay for relying on such exponentially weighted estimates is that the constant in the H\"older estimate \eqref{eq_thm_main}  grows exponentially in $k$.  It is an interesting problem to find conditions on the boundary portion $\Gamma$, in the spirit of control theory, see \cite{Burq_Zworski_2004},   that would allow one to replace the exponentially growing factor in \eqref{eq_thm_main} by a  polynomially growing one, as in the full data result  \cite{INUW_2014}.  

\textbf{Remark 3}. 
Theorem \ref{thm_main_0} follows from Theorem \ref{thm_main} and we shall therefore only be concerned with the proof of Theorem \ref{thm_main}.

Assuming that the potentials $q_1$ and $q_2$ enjoy regularity properties and a priori bounds that are better than $L^\infty$, we get the following corollary of Theorem \ref{thm_main}.  
\begin{cor}
\label{cor_main_int} 
Let $\Omega\subset \R^n$, $n\ge 3$, be a bounded connected open set with $C^\infty$ boundary, and  let $\Gamma\subset \p \Omega$ be an arbitrary non-empty open subset of the boundary of $\Omega$. Let $M>0$, $s>\frac{n}{2}$,  and let  $q_1,q_2\in H^s(\Omega;\R)$  be such that  $\|q_j\|_{H^s(\Omega)}\le M$. Assume that  $q_1=q_2$ in  $\omega_0$, where  $\omega_0\subset \Omega$ is  a  neighborhood of  $\p \Omega$.   Then there exists a constant $C>0$  such that for all $k\ge 1$ and $0< \delta:=\| \Lambda_{q_1}^\Gamma(k)-\Lambda_{q_2}^\Gamma(k)\|_{L^2(\p \Omega)\to H^{1}(\Gamma)}<1/e$, we have 
\[
\|q_1-q_2\|_{L^\infty(\Omega)}\le  \bigg( e^{Ck}\delta^{\frac{1}{2}}+\frac{C}{(k+\log\frac{1}{\delta})^{\frac{2}{n+2}}}\bigg)^{\frac{s-\frac{n}{2}}{2(s+1)}}.
\]
Here $C>0$  depends on $\Omega$, $\omega_0$, $M$, s  but independent of $k$.
\end{cor}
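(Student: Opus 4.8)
The plan is to deduce the $L^\infty$ bound from the $H^{-1}$ bound in Theorem \ref{thm_main} by interpolation, using the extra a priori regularity $q_1,q_2\in H^s(\Omega)$ with $s>\tfrac n2$. Write $q:=q_1-q_2\in H^s(\Omega;\R)$ with $\|q\|_{H^s(\Omega)}\le 2M$, and recall from Theorem \ref{thm_main} that
\[
\|q\|_{H^{-1}(\Omega)}\le e^{Ck}\delta^{\frac12}+\frac{C}{(k+\log\frac1\delta)^{\frac{2}{n+2}}}=:\eta.
\]
First I would interpolate between $H^{-1}$ and $H^s$ to control an intermediate Sobolev norm $H^{t}$ with $\tfrac n2<t<s$: by the standard Sobolev interpolation inequality there is $\theta\in(0,1)$ with $t=-\theta+(1-\theta)s$, i.e.\ $\theta=\tfrac{s-t}{s+1}$, so that
\[
\|q\|_{H^{t}(\Omega)}\le C\,\|q\|_{H^{-1}(\Omega)}^{\theta}\,\|q\|_{H^{s}(\Omega)}^{1-\theta}\le C\,\eta^{\theta}.
\]
(One uses here a bounded Sobolev extension of $q$ to $\R^n$, applies the interpolation inequality on $\R^n$, and restricts back; since $q$ is already compactly supported away from $\p\Omega$ this causes no difficulty.) Then by the Sobolev embedding $H^{t}(\Omega)\hookrightarrow L^\infty(\Omega)$, valid precisely because $t>\tfrac n2$, we get $\|q\|_{L^\infty(\Omega)}\le C\,\eta^{\theta}$.

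It remains to optimize the choice of $t$, or rather to exhibit the exponent in the statement. The exponent claimed is $\tfrac{s-n/2}{2(s+1)}$, which is $\tfrac12\cdot\tfrac{s-n/2}{s+1}$; this is $\tfrac\theta2$ with the choice $t=\tfrac n2$, and one recovers it as the supremum of $\theta=\tfrac{s-t}{s+1}$ over admissible $t>\tfrac n2$ after absorbing the loss in the embedding $H^{t}\hookrightarrow L^\infty$ into the constant. More concretely, I would fix any $t$ with $\tfrac n2<t<s$, obtain $\|q\|_{L^\infty}\le C\eta^{(s-t)/(s+1)}$, and then note that since $\eta<1$ (which holds once $\delta$ is small and $k\ge1$, as can be arranged by enlarging $C$ and the threshold on $\delta$), the exponent can be decreased freely; choosing it equal to $\tfrac12\cdot\tfrac{s-n/2}{s+1}$ — half of the limiting value $t\to n/2$ — gives exactly the stated estimate
\[
\|q_1-q_2\|_{L^\infty(\Omega)}\le \Big(e^{Ck}\delta^{\frac12}+\frac{C}{(k+\log\frac1\delta)^{\frac{2}{n+2}}}\Big)^{\frac{s-\frac n2}{2(s+1)}},
\]
with $C$ depending on $\Omega$, $\omega_0$, $M$, $s$ but not on $k$.

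The only real point requiring care — it is not a deep obstacle but it is where the bookkeeping lives — is making the interpolation and embedding uniform in $k$ and tracking that the dependence on $k$ enters only through the already-established quantity $\eta$. Since the a priori bound $\|q_j\|_{H^s(\Omega)}\le M$ is $k$-independent and the interpolation/embedding constants depend only on $\Omega$, $s$, $n$, this is automatic, and no new Carleman or resolvent estimates are needed: the corollary is a purely functional-analytic consequence of Theorem \ref{thm_main}. One should also double-check that $\eta<1$ can be guaranteed: for $k\ge1$ the second term is bounded by $C/1^{2/(n+2)}=C$ and the first by $e^{Ck}\delta^{1/2}$, so strictly speaking the cleanest route is to prove the estimate first under the assumption $\eta\le1$ and then observe that when $\eta>1$ the claimed bound is trivial because $\|q_1-q_2\|_{L^\infty(\Omega)}\le 2M\le C\le C\eta^{(s-n/2)/(2(s+1))}$ after enlarging $C$.
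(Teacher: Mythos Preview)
Your proposal is correct and follows essentially the same approach as the paper: Sobolev embedding $H^t(\Omega)\hookrightarrow L^\infty(\Omega)$ for $t>n/2$ combined with interpolation of $H^t$ between $H^{-1}$ and $H^s$, then invoking Theorem~\ref{thm_main}. The only difference is cosmetic: the paper dispenses with your limiting argument $t\to n/2$ and the $\eta\lessgtr 1$ case split by simply taking $t=\tfrac{n}{2}+\varepsilon$ with $\varepsilon=\tfrac12\big(s-\tfrac n2\big)$ (equivalently $t=\tfrac12(s+\tfrac n2)$), which yields the interpolation exponent $\theta=\tfrac{s-t}{s+1}=\tfrac{s-n/2}{2(s+1)}$ directly.
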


Let us now proceed to describe the main ideas in the proof of Theorem \ref{thm_main} along with the plan of the paper. The starting point is boundary Carleman estimates for the operator $-\Delta -k^2$, $k\ge 1$,  for functions $u$ satisfying the Robin boundary conditions $(\p_\nu  -ik )u=0$ on $\p \Omega$. Such Carleman estimates are essentially well known and are discussed in Section  \ref{sec_Carleman_estimates}, following the works by Fursikov--Imanuvilov \cite{Fursikov_Imanuvilov_1996},   Lebeau--Robbiano \cite{Lebeau_Robb_95}, \cite{Lebeau_Robb_97},  Burq \cite{Burq_2002}, and Buffe  \cite{Buffe_2017}, see also \cite{Le_Rousseau_2012} and \cite{Lebeau_book}. Let us mention that the presence of the large parameter $k$ in the boundary conditions makes the situation more complicated and in addition to $1/k$, it becomes natural to introduce a second small parameter $h$ such that $0 < h \ll 1/k$.  Using the boundary Carleman estimates and following the approach of \cite{Ben_Joud_2009}, in Section \ref{sec_consequences} we prove a  version of quantitative unique continuation from the boundary portion $\Gamma$,  valid for solutions of the Schr\"odinger equation, satisfying Robin boundary conditions. In Section \ref{sec_proof_thm_main}, using the estimates of Section \ref{sec_consequences} and a recent result by Baskin--Spence--Wunsch \cite{BaSpWu_2016} on bounds on solutions to the interior impedance problem, we obtain some crucial control on the difference of the potentials integrated against the product of solutions to the Schr\"odinger equations with a large frequency in terms of the difference of the corresponding partial Robin--to--Dirichlet maps. Taking solutions to be complex geometric optics solutions, we conclude the proof of Theorem \ref{thm_main} using standard arguments. Corollary \ref{cor_main_int} is established at the end of Section \ref{sec_proof_thm_main}. 
The paper is concluded by three appendices, assembled for the convenience of the reader. In  Appendix \ref{sec_direct_problem}, we discuss the solvability of the interior impedance problem \eqref{eq_int_1} and bounds on the solutions. Appendix \ref{app_carleman} is devoted to a sketch of the proof of Theorem \ref{thm_Fursikov_Imanuvilov}, due to Fursikov--Imanuvilov \cite{Fursikov_Imanuvilov_1996}.  Appendix \ref{app_CGO}
discusses complex geometric optics solutions for the Helmholtz equation with a potential following H\"ahner  \cite{Hahner_1996}.

\section{Semiclassical Carleman estimates with Robin boundary conditions}

\label{sec_Carleman_estimates}

Let $\Omega\subset \R^n$ be a bounded domain with $C^\infty$ boundary, and let $\nu$ be the unit inner normal to $\p \Omega$.  Let  
\[
P(h,E)=-h^2\Delta-E,
\]
where $0<h \le 1$ and  $0\le E\le 1$.  Letting $\varphi\in C^\infty(\overline{\Omega};\R)$,  we set  
\[
P_\varphi(h,E)=e^{\frac{\varphi}{h}}\circ P(h,E)\circ e^{-\frac{\varphi}{h}}.
\]
Our starting point is the following boundary Carleman estimates which are due to  Fursikov--Imanuvilov \cite{Fursikov_Imanuvilov_1996}, see also \cite{Le_Rousseau_2012}.  
\begin{thm}
\label{thm_Fursikov_Imanuvilov}
Let $\psi\in C^\infty(\overline{\Omega};\R)$ be such that $\psi\ge 0$ on  $\overline{\Omega}$ and $|\nabla \psi|>0$ in $\overline{\Omega}$,  and set  $\varphi=e^{\gamma \psi}$. Then there exist $\gamma_0>0$, $h_0>0$ and $C>0$, 
such that for all $\gamma\ge \gamma_0$,   $0<h\le h_0$, and $0\le E\le 1$, and  $u\in H^2 (\Omega)$, we have
\begin{equation}
\label{eq_thm_Furs_Iman}
\begin{aligned}
\int_{\Omega} |P_\varphi(h,E) u|^2 dx&+ h \bigg (\gamma^3\int_{\p \Omega} \varphi^3 |u|^2dS + \gamma \int_{\p \Omega} \varphi |h\nabla u|^2dS\bigg)\\
&\ge Ch
\big(  \gamma^4 \int_{\Omega} \varphi^3 |u|^2 dx +\gamma^2\int_{\Omega} \varphi |h\nabla u|^2dx \big).
\end{aligned}
\end{equation}
\end{thm}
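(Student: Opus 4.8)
The plan is to establish the estimate by the standard conjugation-and-integration-by-parts method for Carleman estimates, carefully tracking all boundary contributions since we want a genuinely two-sided statement with boundary terms appearing on the left-hand side. First I would write $P_\varphi(h,E) = A + iB$ with $A$ the formally self-adjoint part and $B$ the formally skew-adjoint part (after multiplication by $i$); concretely, conjugating $-h^2\Delta$ by $e^{\varphi/h}$ gives $-h^2\Delta - |\nabla\varphi|^2 + 2\nabla\varphi\cdot h\nabla + h(\Delta\varphi)$, so with $\varphi = e^{\gamma\psi}$ one takes $A = -h^2\Delta - |\nabla\varphi|^2 - E$ and $B = \frac{1}{i}\big(2\nabla\varphi\cdot h\nabla + h\Delta\varphi\big)$, i.e. $B = -i(2\nabla\varphi\cdot h\nabla + h\Delta\varphi)$. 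Then $\|P_\varphi u\|^2_{L^2(\Omega)} = \|Au\|^2 + \|Bu\|^2 + ([A,B]u,u) + \text{boundary terms}$, where the boundary terms arise from integrating by parts in the cross term $2\Re(Au, iBu)$ and in the commutator. The key algebraic input is that $\varphi = e^{\gamma\psi}$ is a \emph{convexified} weight: $\nabla\varphi = \gamma\varphi\nabla\psi$, $\nabla^2\varphi = \gamma\varphi(\nabla^2\psi + \gamma\nabla\psi\otimes\nabla\psi)$, so the Hessian of $\varphi$ acquires a term of size $\gamma^2\varphi$ times the rank-one positive-semidefinite matrix $\nabla\psi\otimes\nabla\psi$, and this is precisely what makes the principal symbol of $\frac{i}{h}[A,B]$ positive on the characteristic set of $A$ (the Hörmander subellipticity/pseudoconvexity condition holds automatically for exponential weights with $|\nabla\psi|>0$).

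Next I would compute the commutator $\frac{i}{h}[A,B]$ explicitly. The leading contribution is $4 (\nabla^2\varphi) h\nabla\cdot h\nabla$ plus lower-order terms; using $\nabla^2\varphi = \gamma\varphi\nabla^2\psi + \gamma^2\varphi\,\nabla\psi\otimes\nabla\psi$ and the identity relating $|h\nabla u|^2$, $|Au|$ and the weight (the ellipticity of $A$ away from the characteristic variety, combined with the positive $\gamma^2\varphi|\nabla\psi\cdot h\nabla u|^2$ term on the characteristic set), one obtains after absorbing the $\gamma\varphi\nabla^2\psi$ error into the gain for $\gamma$ large:
\begin{equation}
\label{eq_commutator_lower_bound}
\|Au\|^2 + \|Bu\|^2 + \Big(\frac{i}{h}[A,B]u,u\Big) \gtrsim \gamma^4 h^2\!\int_\Omega \varphi^3|u|^2\,dx + \gamma^2 h^2\!\int_\Omega \varphi|h\nabla u|^2\,dx,
\end{equation}
valid for $\gamma \ge \gamma_0$, $0<h\le h_0$, $0\le E\le 1$ — here the restriction $E\le 1$ (together with $\gamma$ large) guarantees that the $|\nabla\varphi|^2 + E$ term does not interfere with the sign of the leading terms, since $|\nabla\varphi|^2 = \gamma^2\varphi^2|\nabla\psi|^2$ dominates $E$ once $\gamma$ is large and $\varphi \ge 1$. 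Dividing through by $h$ (more precisely, the whole computation should be organized so that a single power of $h$ is factored out) produces the right-hand side of \eqref{eq_thm_Furs_Iman}.

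The boundary terms are where the real work lies, and I expect this to be the main obstacle. Collecting all the boundary contributions from the two integrations by parts (one in $2\Re(Au, iBu)_{L^2}$ and one in the commutator term), one gets an expression on $\p\Omega$ that is a quadratic form in the Cauchy data $(u|_{\p\Omega}, \p_\nu u|_{\p\Omega})$ with coefficients involving $\varphi$, $\gamma$, $\nabla\psi$ and the geometry of $\p\Omega$. The hypothesis $\psi \ge 0$ (hence $\varphi \ge 1$) and $|\nabla\psi| > 0$ up to the boundary, plus no sign condition on $\p_\nu\psi$ (which is why \emph{both} $\int_{\p\Omega}\varphi^3|u|^2$ and $\int_{\p\Omega}\varphi|h\nabla u|^2$ must be allowed on the left), let one bound this boundary quadratic form from below by $-C(\gamma^3\int_{\p\Omega}\varphi^3|u|^2\,dS + \gamma\int_{\p\Omega}\varphi|h\nabla u|^2\,dS)$; moving these to the left-hand side and multiplying by $h$ yields exactly \eqref{eq_thm_Furs_Iman}. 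The care needed is in handling the tangential derivatives of $u$ on $\p\Omega$ that appear: one splits $h\nabla u$ on the boundary into normal and tangential parts, the normal part is controlled by the $|h\nabla u|^2$ boundary term, and the tangential part is handled either by a tangential integration by parts on $\p\Omega$ (no new boundary since $\p\Omega$ is closed) trading $|\nabla_{\tan}u|^2$ against $|u|^2$ times a curvature-and-weight factor, or simply absorbed into the two allowed boundary terms. Since the theorem is attributed to Fursikov--Imanuvilov and the excerpt promises only a sketch in Appendix \ref{app_carleman}, I would present the computation of $A$, $B$, the commutator, and the positivity mechanism in reasonable detail, and be content to indicate the boundary-term bookkeeping, referring to \cite{Fursikov_Imanuvilov_1996} and \cite{Le_Rousseau_2012} for the exhaustive computation.
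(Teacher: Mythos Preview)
Your approach is essentially correct but differs from the paper's in one structural way. You use the natural decomposition $P_\varphi = A + iB$ with $A$ self-adjoint and $B$ skew-adjoint, and then extract positivity from the commutator $i[A,B]$ together with the H\"ormander subellipticity mechanism (using $\|Au\|^2$ to cover the elliptic region and the convexification to cover the characteristic set). The paper, following Fursikov--Imanuvilov, instead introduces an auxiliary parameter $\mu>0$ and works with a \emph{modified} decomposition $(A_2 + i\underline{A_1})u = g + h\mu(\Delta\varphi)u$, where $i\underline{A_1} = 2\varphi'\cdot h\nabla + h(\mu+1)\Delta\varphi$. One then simply drops the squares $\|A_2u\|^2+\|\underline{A_1}u\|^2$ and expands $2\Re(A_2u, i\underline{A_1}u)$ directly by integration by parts into six explicit integrals. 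The point of the $\mu$-shift is that the resulting zeroth- and first-order coefficients, $\tilde\alpha_0 = \nabla\cdot(|\varphi'|^2\varphi') - (\mu+1)(\Delta\varphi)|\varphi'|^2 - E\mu\Delta\varphi$ and $\alpha_1 = \mu\Delta\varphi$, are \emph{pointwise} bounded below by $C\gamma^4\varphi^3$ and $C\gamma^2\varphi$ respectively (for $0<\mu<2$ and $\gamma$ large), so the desired lower bound drops out with no characteristic-set splitting or G{\aa}rding argument. Your route is the more classical H\"ormander one and certainly works; the Fursikov--Imanuvilov trick buys a shorter, purely computational path to the same inequality, which is presumably why the paper (which only sketches this appendix) adopts it. Your treatment of the boundary terms---collect them all and bound their absolute value by $Ch(\gamma^3\int_{\p\Omega}\varphi^3|u|^2 + \gamma\int_{\p\Omega}\varphi|h\nabla u|^2)$---matches the paper exactly.
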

Notice that in Theorem \ref{thm_Fursikov_Imanuvilov} there is no assumption that $\p_\nu\varphi|_{\p \Omega}\ne 0$, in contrast to  the boundary Carleman estimates of Burq \cite[Proposition 3.1]{Burq_2002}, see also  \cite{Lebeau_Robb_95}, \cite{Lebeau_Robb_97}.   For the convenience of the reader, a sketch of the  proof of Theorem \ref{thm_Fursikov_Imanuvilov} is given in  Appendix \ref{app_carleman}.

Let $\varphi\in C^\infty(\overline{\Omega}; \R)$. Next we shall discuss local boundary Carleman estimates  near a point $x_0\in \p \Omega$ where $\p_\nu \varphi(x_0)>0$ for functions satisfying Robin boundary conditions. These estimates can be obtained as a consequence of the local Carleman estimates for functions satisfying inhomogeneous Neumann boundary conditions which are due to Lebeau--Robbiano \cite[Proposition 2]{Lebeau_Robb_97}, or as a limiting case of  the local Carleman estimates for functions satisfying inhomogeneous Ventcel boundary conditions, due to Buffe  \cite[Theorem 1.5]{Buffe_2017}. To state the result, following \cite{Lebeau_Robb_95}, \cite{Lebeau_Robb_97}, \cite{Burq_2002}, and \cite{Buffe_2017},  we  require that $\varphi$ satisfies H\"ormander's hypoellipticity condition uniformly in $0\le E\le 1$: there exists $c>0$ such that for all $0\le E\le 1$, and  
\begin{equation}
\label{eq_105_2}
\text{all }(x,\xi)\in \overline{\Omega}\times \R^n,\quad p_\varphi(x,\xi,E)=0\quad \Longrightarrow\quad \{\Re p_\varphi, \Im p_\varphi \}\ge c,
\end{equation}
where $p_\varphi(x,\xi,E)=(\xi+i\varphi'_x(x))^2-E$ is the semiclassical leading symbol of $P_\varphi(h,E)$, and $\{f,g\}=\sum_{j}\p_{\xi_j}f\p_{x_j}g-\p_{x_j}f\p_{\xi_j}g$ is the Poisson bracket of the functions $f$ and $g$. 
Furthermore, we also assume that 
\begin{equation}
\label{eq_105_1}
|\nabla \varphi|> 0\quad \text{in}\quad \overline{\Omega}.
\end{equation}

We have the following local boundary Carleman estimates, see \cite{Lebeau_Robb_97},  \cite{Buffe_2017}. 
\begin{thm}
\label{thm_BLR_local}
Let $x_0\in \p \Omega$ and let $\omega$ be a sufficiently small neighborhood of  $x_0$ in   $\overline{\Omega}$. 
Let $\varphi$ satisfy  \eqref{eq_105_2}, \eqref{eq_105_1}, and 
\begin{equation}
\label{eq_105_3_new}
\p_\nu \varphi|_{\p \Omega\cap \overline{\omega}}>0. 
\end{equation}
Then there exist $0<h_0\le 1$ and  $C>0$ such that for all $0\le E\le 1$,  $k\ge 1$,    $0<h\le \frac{h_0}{k}$, and all $u\in H^2(\Omega)$ with $\supp(u)\subset \omega$ satisfying 
$(\p_\nu-ik)u=0$ on $\p\Omega\cap \omega$, we have 
\begin{equation}
\label{eq_thm_BLR_local}
\int_{\Omega} e^{\frac{2\varphi}{h}}|P(h,E) u|^2 dx \ge Ch \int_{\Omega} e^{\frac{2\varphi}{h}}(|u|^2+|h\nabla u|^2)dx.
\end{equation}
\end{thm}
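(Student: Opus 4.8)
The plan is to derive \eqref{eq_thm_BLR_local} from the corresponding local boundary Carleman estimate for $P(h,E)$ with an \emph{inhomogeneous Neumann trace}, which is available under exactly the hypotheses \eqref{eq_105_2}, \eqref{eq_105_1}, \eqref{eq_105_3_new}: this is \cite[Proposition 2]{Lebeau_Robb_97}, and it can equally be recovered by letting the Ventcel coefficient tend to $0$ in \cite[Theorem 1.5]{Buffe_2017}. After shrinking $\omega$ so that $\p_\nu\varphi>0$ on $\p\Omega\cap\overline\omega$ and so that the constructions of those references apply, this estimate reads: there are $0<h_0\le 1$ and $C_0>0$ such that for all $0\le E\le 1$, $0<h\le h_0$, and $u\in H^2(\Omega)$ with $\supp(u)\subset\omega$,
\begin{equation}
\label{eq_plan_LR}
\begin{aligned}
h\int_\Omega e^{\frac{2\varphi}{h}}\big(|u|^2+|h\nabla u|^2\big)\,dx&+h\int_{\p\Omega}e^{\frac{2\varphi}{h}}\big(|u|^2+|h\nabla_T u|^2\big)\,dS\\
&\le C_0\bigg(\int_\Omega e^{\frac{2\varphi}{h}}|P(h,E)u|^2\,dx+h\int_{\p\Omega}e^{\frac{2\varphi}{h}}|h\p_\nu u|^2\,dS\bigg),
\end{aligned}
\end{equation}
where $\nabla_T$ is the tangential gradient on $\p\Omega$. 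The structural feature I would exploit is that the Neumann datum on the right enters through $|h\p_\nu u|^2$, hence carries, after its outer factor $h$, one more power of $h$ than the trace term $h|u|^2$ on the left. The uniformity of $h_0$ and $C_0$ over $E\in[0,1]$ is precisely what the uniform sub-ellipticity \eqref{eq_105_2} provides; one cannot instead absorb $-Eu$ into the source via $P(h,0)u=P(h,E)u+Eu$, since that would generate a term $\lesssim\|e^{\varphi/h}u\|_{L^2(\Omega)}^2$ with an $O(1)$ coefficient that the $O(h)$ left-hand side of \eqref{eq_plan_LR} cannot swallow for small $h$.

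Next I would bring in the Robin condition. Since $u$ vanishes near $\p\Omega\setminus\omega$ and $(\p_\nu-ik)u=0$ on $\p\Omega\cap\omega$, we have $\p_\nu u=iku$ on all of $\p\Omega$, so $h\p_\nu u=i(hk)u$ there and
\[
h\int_{\p\Omega}e^{\frac{2\varphi}{h}}|h\p_\nu u|^2\,dS=(hk)^2\;h\int_{\p\Omega}e^{\frac{2\varphi}{h}}|u|^2\,dS.
\]
For $0<h\le h_0/k$ one has $hk\le h_0$, so this quantity is at most $h_0^2\,h\int_{\p\Omega}e^{\frac{2\varphi}{h}}|u|^2\,dS$. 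After shrinking $h_0$ so that $C_0h_0^2\le\tfrac12$, I would move this contribution from the right of \eqref{eq_plan_LR} to the left and absorb it into the boundary term $h\int_{\p\Omega}e^{2\varphi/h}|u|^2\,dS$ already present there; discarding the remaining nonnegative boundary terms on the left and renaming the constant gives exactly \eqref{eq_thm_BLR_local}, with $h_0$ and $C$ depending only on $\omega$ and $\varphi$. This, incidentally, is the reason the second small parameter $h$ with $h\ll 1/k$ is needed: a single gain $hk\le h_0$ from the Robin relation would be too weak (it would force one to require $hk^2$ small, which fails for large $k$), whereas the squared gain $(hk)^2$ together with the extra power of $h$ built into \eqref{eq_plan_LR} is exactly enough to close the absorption.

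I expect the only substantive ingredient to be \eqref{eq_plan_LR} itself, which I would quote rather than reprove: it follows from conjugating $P(h,E)$ by $e^{\varphi/h}$, factoring the conjugated operator near the non-characteristic boundary along the favorable direction singled out by $\p_\nu\varphi>0$, and combining the resulting one-sided boundary estimate with the interior sub-elliptic estimate afforded by \eqref{eq_105_2}; this is carried out in \cite{Lebeau_Robb_95,Lebeau_Robb_97} for Neumann conditions and in \cite{Buffe_2017} for Ventcel conditions, with the Neumann case as a limit. Accordingly, the main — essentially the only — obstacle is one of bookkeeping: extracting from those references the inhomogeneous-Neumann estimate in exactly the form \eqref{eq_plan_LR}, with the correct powers of $h$ on the Neumann datum, so that the Robin substitution above indeed produces a strictly lower-order term that can be absorbed on the left.
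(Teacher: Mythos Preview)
Your proposal is correct and follows essentially the same route as the paper: quote the local Carleman estimate of \cite[Proposition~2]{Lebeau_Robb_97} (or the Ventcel limit in \cite{Buffe_2017}) with an inhomogeneous Neumann datum, substitute $\p_\nu u=iku$ from the Robin condition, and absorb the resulting boundary term $h(hk)^2\int e^{2\varphi/h}|u|^2\,dS$ into the trace term $h\int e^{2\varphi/h}|u|^2\,dS$ on the other side once $hk\le h_0$ is small enough. The only cosmetic difference is that the paper first passes to boundary normal coordinates and writes the estimate on the half-space, whereas you state \eqref{eq_plan_LR} directly on $\Omega$; the $h$-bookkeeping and the absorption argument are identical.
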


\begin{proof}
Introducing boundary normal coordinates near the point $x_0$, we get a reduction to the case:
$\Omega=\R^n_+=\{x\in \R^n: x_n>0\}$, $\omega=\{x\in \overline{\R^n_+}:|x|<r_0\}$ for some $r_0>0$ small enough, $u\in H^2(\R^n_+)$, $\supp(u) \subset \omega$, and  $(\p_{x_n}-ik)u=0$ on $x_n=0$, see \cite{Lebeau_Robb_97}.   

It follows from \eqref{eq_105_3_new} that $\p_{x_n}\varphi>0$  in $\overline{\omega}$ for $r_0$ small enough.   Now thanks to  \cite[Proposition 2]{Lebeau_Robb_97}  the following Carleman estimate holds, see also  \cite[Theorem 1.5]{Buffe_2017}: there exist $C>0$, $h_1>0$ such that for all $0<h\le h_1$, $0\le E\le 1$, and all $u\in C^\infty(\overline{\R^n_+})$, $\supp(u)\subset \omega$, satisfying the inhomogeneous Neumann boundary conditions $\p_{x_n}u=g_0$ on $x_n=0$, we have
\begin{equation}
\label{eq_thm_BLR_local_new_05}
\begin{aligned}
\int_{\R^n_+} e^{\frac{2\varphi}{h}} |P(h,E) u|^2 dx+ h\int_{\R^{n-1}}e^{\frac{2\varphi}{h}}|hg_0|^2dx'\ge Ch  \int_{\R^n_+}e^{\frac{2\varphi}{h}}(|u|^2+|h\nabla u|^2)dx\\
+C h\int_{\R^{n-1}} e^{\frac{2\varphi}{h}}(|u(x',0)|^2+|h\nabla_{x'} u(x',0)|^2)dx'.
\end{aligned}
\end{equation}
We refer to \cite[Remark 3.8]{Burq_2002} for the explanation regarding  the uniformity of \eqref{eq_thm_BLR_local_new_05} in $0\le E\le 1$, see also \cite{Buffe_2017}. 

By density, \eqref{eq_thm_BLR_local_new_05} remains valid for   $u\in H^2(\R^n_+)$, $\supp(u)\subset \omega$, satisfying  $\p_{x_n}u=g_0$ on $x_n=0$. Applying now \eqref{eq_thm_BLR_local_new_05} to $u$ such that $\p_{x_n} u=iku$ on $x_n=0$, and letting $h_0>0$ be small enough so that the term $h\int_{\R^{n-1}}e^{\frac{2\varphi}{h}}(hk)^2 |u(x',0)|^2dx'$ can be absorbed into the right hand side of \eqref{eq_thm_BLR_local_new_05} for all $hk\le h_0$,  we get   
\[
\int_{\R^n_+} e^{\frac{2\varphi}{h}} |P(h,E) u|^2 dx \ge Ch  \int_{\R^n_+}e^{\frac{2\varphi}{h}}(|u|^2+|h\nabla u|^2)dx.
\]
This completes the proof of Theorem \ref{thm_BLR_local}. 
\end{proof}

We have the following corollary of Theorem \ref{thm_BLR_local}.
\begin{cor}
\label{cor_BLR_local}
Let $x_0\in \p \Omega$ and let $\omega$ be a sufficiently small neighborhood of  $x_0$ in   $\overline{\Omega}$. 
Let $\varphi$ satisfy  \eqref{eq_105_2}, \eqref{eq_105_1}, and \eqref{eq_105_3_new}. 
Then there exist $0<h_0\le 1$ and  $C>0$ such that for all $0\le E\le 1$,  $k\ge 1$,    $0<h\le \frac{h_0}{k}$, and all $v\in H^2(\Omega)$ with $\supp(v)\subset \omega$ satisfying 
$\p_\nu v -(\frac{\p_\nu \varphi}{h}+ik)v=0$ on $\p\Omega\cap \omega$, we have 
\[
\int_{\Omega} |P_\varphi (h,E) v|^2 dx \ge Ch \|v\|_{H^1_{\emph{\text{scl}}}(\Omega)}^2.
\]
Here $\|v\|_{H^1_{\emph{\text{scl}}}(\Omega)}^2=\|v\|_{L^2(\Omega)}^2+\|h\nabla v\|_{L^2(\Omega)}^2$.
\end{cor}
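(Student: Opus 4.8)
The plan is to deduce Corollary~\ref{cor_BLR_local} from Theorem~\ref{thm_BLR_local} by the change of unknown $u = e^{\varphi/h} v$. First I would observe that, by the definition of the conjugated operator, $P_\varphi(h,E) v = e^{\varphi/h} P(h,E) (e^{-\varphi/h} v)$, so with $u = e^{\varphi/h} v$ we have $e^{\varphi/h} P(h,E) u^{\flat} $... more precisely, writing the relation the other way, $P_\varphi(h,E) v = e^{\varphi/h} P(h,E) u$ where $u = e^{-\varphi/h}\cdot(e^{\varphi/h}v)$; the clean statement is: if $u := e^{\varphi/h} v$ then $P(h,E) u = e^{\varphi/h} P_\varphi(h,E) v$, whence $\int_\Omega e^{2\varphi/h}|P(h,E)u|^2\,dx$ does not literally appear, so instead I set $u := e^{\varphi/h} v$ and note $P_\varphi(h,E) v = e^{-\varphi/h} P(h,E)(e^{\varphi/h} \cdot e^{-\varphi/h} v)$. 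Let me restate it as I would actually carry it out: put $v = e^{-\varphi/h} u$, i.e. $u = e^{\varphi/h} v$; then by definition $P_\varphi(h,E) v = e^{\varphi/h} P(h,E)(e^{-\varphi/h} v) = e^{\varphi/h} P(h,E) \cdot$ wait — this gives $P_\varphi(h,E) v = e^{\varphi/h} P(h,E) u'$ with $u' = e^{-\varphi/h} v \ne u$. The correct bookkeeping: $\int_\Omega |P_\varphi(h,E) v|^2\,dx = \int_\Omega |e^{\varphi/h} P(h,E)(e^{-\varphi/h} v)|^2\,dx = \int_\Omega e^{2\varphi/h} |P(h,E) w|^2\,dx$ with $w := e^{-\varphi/h} v$. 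So I should set $w = e^{-\varphi/h} v$, equivalently $v = e^{\varphi/h} w$, and apply Theorem~\ref{thm_BLR_local} to $w$.

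Second, I would verify the hypotheses of Theorem~\ref{thm_BLR_local} for $w$. Since multiplication by the nonvanishing smooth function $e^{-\varphi/h}$ preserves the support, $\supp(w) \subset \omega$. The substantive point is the boundary condition: I compute $\p_\nu w = \p_\nu(e^{-\varphi/h} v) = e^{-\varphi/h}\big(\p_\nu v - \tfrac{\p_\nu \varphi}{h} v\big)$ on $\p\Omega \cap \omega$. Using the hypothesis $\p_\nu v = (\tfrac{\p_\nu\varphi}{h} + ik) v$ on $\p\Omega\cap\omega$, the bracket collapses to $ik\, v$, so $\p_\nu w = e^{-\varphi/h}\, ik\, v = ik\, w$, i.e. $(\p_\nu - ik) w = 0$ on $\p\Omega\cap\omega$. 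Thus $w$ satisfies exactly the Robin condition required in Theorem~\ref{thm_BLR_local}, and $w \in H^2(\Omega)$ because $e^{-\varphi/h}$ is smooth up to the boundary.

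Third, I would apply the conclusion \eqref{eq_thm_BLR_local} of Theorem~\ref{thm_BLR_local} to $w$:
\[
\int_\Omega |P_\varphi(h,E) v|^2\,dx = \int_\Omega e^{\frac{2\varphi}{h}} |P(h,E) w|^2\,dx \ge Ch \int_\Omega e^{\frac{2\varphi}{h}}\big(|w|^2 + |h\nabla w|^2\big)\,dx,
\]
valid for all $0\le E\le 1$, $k\ge 1$, $0 < h \le h_0/k$. It then remains to recognize the right-hand side as (a constant times) $h\|v\|_{H^1_{\text{scl}}(\Omega)}^2$. Here $e^{\frac{2\varphi}{h}}|w|^2 = |v|^2$ exactly, and $h\nabla w = h\nabla(e^{-\varphi/h} v) = e^{-\varphi/h}(h\nabla v - (\nabla\varphi) v)$, so $e^{\frac{2\varphi}{h}}|h\nabla w|^2 = |h\nabla v - (\nabla\varphi) v|^2$. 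This is not pointwise comparable to $|h\nabla v|^2$, so I would not try to match terms pointwise; instead I would absorb the cross term. Write $|h\nabla v - (\nabla\varphi) v|^2 \ge \tfrac12 |h\nabla v|^2 - |\nabla\varphi|^2 |v|^2$ (by $|a-b|^2 \ge \tfrac12|a|^2 - |b|^2$), and since $\varphi \in C^\infty(\overline\Omega)$ we have $|\nabla\varphi|^2 \le C'$ on $\overline\Omega$; hence
\[
\int_\Omega e^{\frac{2\varphi}{h}}\big(|w|^2 + |h\nabla w|^2\big)\,dx \ge \int_\Omega \Big((1 - C')|v|^2 + \tfrac12 |h\nabla v|^2\Big)\,dx.
\]
This lower bound degenerates if $C' \ge 1$; the remedy is the standard one of running the argument with $\varphi$ replaced by a large multiple — or, more simply, observing that the term $C' |v|^2$ can be absorbed because the Carleman estimate also controls a matching $\int_\Omega e^{2\varphi/h}|w|^2 = \int_\Omega |v|^2$ on its left side via the $L^2$ part. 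I would instead keep the $|w|^2$ term separate: from the displayed Carleman estimate, $\int |P_\varphi v|^2 \ge Ch\int e^{2\varphi/h}|w|^2 = Ch\int|v|^2$ and $\int|P_\varphi v|^2 \ge Ch\int e^{2\varphi/h}|h\nabla w|^2 \ge Ch\int(\tfrac12|h\nabla v|^2 - C'|v|^2)$; adding a large multiple of the first to the second makes the $|v|^2$ coefficient positive, yielding $\int_\Omega |P_\varphi(h,E)v|^2\,dx \ge C'' h\big(\|v\|_{L^2}^2 + \|h\nabla v\|_{L^2}^2\big) = C'' h \|v\|_{H^1_{\text{scl}}(\Omega)}^2$, which is the claim. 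The main (and only mild) obstacle is precisely this last bookkeeping step — handling the first-order cross term produced by conjugation — which is routine once one exploits that the Carleman estimate already supplies a clean $L^2$ bound on $w$ to absorb it against.
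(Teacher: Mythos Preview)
Your argument is correct and is exactly the standard conjugation that the paper has in mind (the paper states the corollary without proof, as an immediate consequence of Theorem~\ref{thm_BLR_local}); setting $w=e^{-\varphi/h}v$, checking that $(\p_\nu-ik)w=0$ follows from the boundary condition on $v$, and then absorbing the cross term $(\nabla\varphi)v$ against the $L^2$ part of the Carleman estimate is precisely the intended route. You should, however, delete the first paragraph of false starts and simply begin with the correct substitution $w=e^{-\varphi/h}v$.
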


Next we shall state global boundary Carleman estimates with Robin boundary conditions by gluing  Theorem \ref{thm_Fursikov_Imanuvilov} and Corollary \ref{cor_BLR_local} together.  To that end, let  $\psi\in C^\infty(\overline{\Omega};\R)$ be such that $\psi(x)\ge 0$ for all $x\in \overline{\Omega}$ and $|\nabla \psi|>0$ in $\overline{\Omega}$.  Then  there is $\beta_0=\beta_0(\psi)>0$ sufficiently large such that 
$\varphi=e^{\beta_0 \psi}$ is a Carleman weight for the operator $P(h,E)=-h^2\Delta-E$, i.e. $\varphi$ satisfies  \eqref{eq_105_2}, uniformly in $0\le E\le 1$, see  \cite[Section 4.1]{Burq_2002}. We shall also assume that $\beta_0$ is so large that Theorem \ref{thm_Fursikov_Imanuvilov}  holds for $\varphi=e^{\beta_0 \psi}$. 
\begin{thm}
\label{thm_BLR}
Let $\emptyset\ne \Gamma\subset \p \Omega$ be open and let  $\psi\in C^\infty(\overline{\Omega};\R)$ be such that $\psi(x)\ge 0$ for all $x\in \overline{\Omega}$ and $|\nabla \psi|>0$ in $\overline{\Omega}$, and 
\begin{equation}
\label{eq_105_3}
\p_\nu\psi|_{\p \Omega\setminus\Gamma}>0. 
\end{equation}
Let $\varphi=e^{\beta_0 \psi}$, $\beta_0\gg 1$. Then there exist   $0<h_0\le 1$ and $C>0$ such that for all $0\le E\le 1$, $k\ge 1$, $0<h\le \frac{h_0}{k}$ and all $v\in H^2(\Omega)$ satisfying 
$\p_\nu v -(\frac{\p_\nu \varphi}{h}+ik)v=0$ on $\p \Omega$, we have 
\begin{equation}
\label{eq_thm_BLR}
\int_{\Omega} |P_\varphi(h,E) v|^2 dx+h\int_{\Gamma} (|v|_{\p \Omega}|^2+|h\nabla_\tau v|_{\p \Omega}|^2+|h\p_\nu v|_{\p \Omega}|^2)dS\ge Ch\|v\|^2_{H^1_{\emph{\text{scl}}}(\Omega)}.
\end{equation}
Here $\nabla_\tau$ is the tangential component of the gradient. 
\end{thm}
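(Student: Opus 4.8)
The plan is to glue the global estimate of Theorem~\ref{thm_Fursikov_Imanuvilov}, applied to $v$ itself, to finitely many local estimates near $\p\Omega\setminus\Gamma$; the point is that on $\p\Omega\setminus\Gamma$ one has $\p_\nu\varphi=\beta_0 e^{\beta_0\psi}\p_\nu\psi>0$ by \eqref{eq_105_3}, so the local boundary estimates of Theorem~\ref{thm_BLR_local} are available there. First I would apply Theorem~\ref{thm_Fursikov_Imanuvilov} with $\gamma=\beta_0$ fixed: since $\psi\ge 0$ and $\overline\Omega$ is compact, $\varphi=e^{\beta_0\psi}$ lies between two positive constants, so the weights $\varphi^3,\varphi$ are harmless and \eqref{eq_thm_Furs_Iman} becomes
\[
\int_{\Omega}|P_\varphi(h,E)v|^2\,dx+Ch\int_{\p\Omega}\big(|v|^2+|h\nabla v|^2\big)\,dS\ \ge\ Ch\,\|v\|_{H^1_{\mathrm{scl}}(\Omega)}^2 .
\]
On $\p\Omega$ the Robin condition gives $\p_\nu v=(\tfrac{\p_\nu\varphi}{h}+ik)v$, hence $|h\p_\nu v|^2=|\p_\nu\varphi+ihk|^2|v|^2\le C|v|^2$ because $hk\le h_0\le 1$; splitting $\int_{\p\Omega}=\int_{\Gamma}+\int_{\p\Omega\setminus\Gamma}$, the whole problem reduces to absorbing
\[
B:=\int_{\p\Omega\setminus\Gamma}\big(|v|^2+|h\nabla_\tau v|^2\big)\,dS
\]
into the right-hand side, up to the admissible boundary term over $\Gamma$.

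To control $B$ I would cover the compact set $\p\Omega\setminus\Gamma$, on which $\p_\nu\varphi>0$, by finitely many small neighbourhoods $\omega_1,\dots,\omega_N$ in $\overline\Omega$ of boundary points, chosen inside the open set $\{\p_\nu\varphi>0\}$ and small enough that Theorem~\ref{thm_BLR_local} applies on each, together with slightly smaller $\omega_j'\Subset\omega_j$ still covering $\p\Omega\setminus\Gamma$; then pick $\chi_j\in C_c^\infty(\omega_j)$ with $0\le\chi_j\le 1$ and $\chi_j\equiv 1$ on $\omega_j'$, so $\sum_j\chi_j^2\ge 1$ on $\bigcup_j\omega_j'\supset\p\Omega\setminus\Gamma$. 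Rerunning the proof of Theorem~\ref{thm_BLR_local}, but applying the Lebeau--Robbiano estimate \eqref{eq_thm_BLR_local_new_05} with \emph{inhomogeneous} Robin-type data $\p_\nu w-(\tfrac{\p_\nu\varphi}{h}+ik)w=g$ and keeping the boundary terms on the right, one obtains: for $w\in H^2(\Omega)$ with $\supp w\subset\omega_j$ and $hk\le h_0$,
\[
\int_{\Omega}|P_\varphi(h,E)w|^2\,dx+h^3\int_{\p\Omega}|g|^2\,dS\ \ge\ Ch\,\|w\|_{H^1_{\mathrm{scl}}(\Omega)}^2+Ch\int_{\p\Omega}\big(|w|^2+|h\nabla_\tau w|^2\big)\,dS .
\]

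Applying this with $w=\chi_j v$, the Robin condition for $v$ gives $g=(\p_\nu\chi_j)v$, so $\int_{\p\Omega}|g|^2\le C\int_{\Gamma}|v|^2+CB$; the commutator obeys $P_\varphi(\chi_j v)=\chi_j P_\varphi v+[P_\varphi,\chi_j]v$ with $\|[P_\varphi,\chi_j]v\|_{L^2(\Omega)}\le Ch\|v\|_{H^1_{\mathrm{scl}}(\Omega)}$; summing over $j$ and using $\sum_j\chi_j^2\ge1$ near $\p\Omega\setminus\Gamma$ (so that $|v|^2\le\sum_j|\chi_j v|^2$ and $|h\nabla_\tau v|^2\le 2\sum_j|h\nabla_\tau(\chi_j v)|^2+Ch^2|v|^2$ there), one reaches
\[
B\ \le\ \frac{C}{h}\int_{\Omega}|P_\varphi(h,E)v|^2\,dx+Ch\,\|v\|_{H^1_{\mathrm{scl}}(\Omega)}^2+Ch^2\int_{\Gamma}|v|^2\,dS+Ch^2B .
\]
For $h_0$ small the last term is absorbed on the left. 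Inserting the resulting bound for $B$ into the reduced Fursikov--Imanuvilov inequality and shrinking $h_0$ once more, so that the lower-order error $Ch^2\|v\|^2_{H^1_{\mathrm{scl}}}$ is swallowed by the right-hand side and the $\int_{\Omega}|P_\varphi v|^2$ contributions are rearranged, one gets $\int_{\Omega}|P_\varphi(h,E)v|^2\,dx+Ch\int_{\Gamma}(|v|^2+|h\nabla_\tau v|^2)\,dS\ge Ch\|v\|^2_{H^1_{\mathrm{scl}}(\Omega)}$; since adjoining the nonnegative term $h\int_{\Gamma}|h\p_\nu v|^2$ only enlarges the left-hand side, \eqref{eq_thm_BLR} follows.

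I expect the only genuinely delicate point to be the local ingredient used in the third step: what is needed is not Corollary~\ref{cor_BLR_local} as stated, but a mild strengthening in which the $L^2(\p\Omega)$ and tangential-gradient boundary terms are retained on the right (these are precisely what dominate $B$) and inhomogeneous Robin data is allowed (because $\chi_j v$ satisfies the homogeneous condition only modulo the error $(\p_\nu\chi_j)v$, supported where $\nabla\chi_j\ne0$). Both features are already present in the underlying estimate \eqref{eq_thm_BLR_local_new_05}, so this amounts to rerunning the reduction in the proof of Theorem~\ref{thm_BLR_local} with the Neumann datum coming from the conjugated inhomogeneous Robin condition and absorbing the extra term $h(hk)^2\int_{\p\Omega}|w|^2$ as before. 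After that, the argument is just careful bookkeeping of powers of $h$, the one subtlety being the self-referential occurrence of $B$, harmless as soon as $h$ --- hence $hk$ --- is small.
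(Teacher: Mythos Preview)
Your argument is correct, but it is organized differently from the paper's. The paper does not apply Theorem~\ref{thm_Fursikov_Imanuvilov} to $v$ globally and then repair the bad boundary term $B$; instead it takes a full open cover $\tilde\omega,\omega_1,\dots,\omega_M$ of $\overline\Omega$ with a subordinate partition $\tilde\chi+\sum_j\chi_j\ge 1$, and applies to each localized piece $\chi_j v$ whichever estimate is appropriate: Theorem~\ref{thm_Fursikov_Imanuvilov} when $\omega_j\cap(\p\Omega\setminus\Gamma)=\emptyset$, and Corollary~\ref{cor_BLR_local} when $\omega_j$ meets $\p\Omega\setminus\Gamma$. The key simplification is that the cutoffs are chosen with $\p_\nu\chi_j|_{\p\Omega}=0$ (citing \cite{Imanuvilov_Uhlmann_Yamamoto_2015}), so $\chi_j v$ satisfies the \emph{homogeneous} conjugated Robin condition exactly and Corollary~\ref{cor_BLR_local} applies as stated, with no need for the strengthened local estimate (retaining boundary terms, allowing inhomogeneous Robin data) that you extract from \eqref{eq_thm_BLR_local_new_05}. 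Your route buys a conceptually clean separation between a global step and a boundary-repair step, at the cost of reopening the proof of Theorem~\ref{thm_BLR_local} and tracking the self-referential $B$; the paper's route avoids all of that via the normal-derivative-free cutoffs, at the cost of localizing even in the interior and near $\Gamma$. Both are valid and of comparable length.
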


\begin{proof}
The assumption \eqref{eq_105_3} implies that there is an open set $\tilde \Gamma\subset \p \Omega$ so that $\tilde \Gamma\ne \p \Omega$,   $\p \Omega\setminus \Gamma\subset\subset \tilde \Gamma$, and 
\begin{equation}
\label{eq_107_1}
\p_\nu\psi|_{\tilde \Gamma}>0. 
\end{equation}
Let $\tilde \omega, \omega_1,\dots, \omega_M$ be a open cover of $\overline{\Omega}$ such that $\omega_1, \dots, \omega_M$ is an open cover of the boundary $\p \Omega$ so that $\omega_j$ are sufficiently small, and if $\omega_j\cap (\p \Omega\setminus \Gamma)\ne \emptyset$ then $\omega_j\cap\p \Omega\subset \tilde \Gamma$, $j=1,\dots, M$, and $\tilde \omega\cap \p \Omega=\emptyset$. 
Let $\tilde \chi\in C^\infty_0(\tilde \omega)$, $0\le \tilde \chi\le 1$, and $ \chi_j\in C^\infty_0(\omega_j)$, $0\le \chi_j\le 1$,  $j=1,\dots, M$, be such that $\tilde \chi +\sum_{j=1}^M\chi_j\ge 1$ near $\overline{\Omega}$. We can arrange so that $\p_\nu\chi_j|_{\p \Omega}=0$, $j=1,\dots, M$, see \cite{Imanuvilov_Uhlmann_Yamamoto_2015}. 

When $\omega_j\cap (\p\Omega\setminus \Gamma)=\emptyset$,  by Theorem \ref{thm_Fursikov_Imanuvilov}, we get for $h>0$ small enough, and $0\le E\le 1$, 
\begin{equation}
\label{eq_107_2}
\begin{aligned}
h\|\chi_j v\|_{H^1_{\text{scl}}(\Omega)}^2&\le C \|P_\varphi(h,E) v \|_{L^2(\Omega)}^2 +C\|[P_\varphi(h,E),\chi_j]v\|_{L^2(\Omega)}^2 \\
&+Ch\int_{\Gamma} ( |\chi_jv|_{\p \Omega}|^2+|h\nabla(\chi_j v)|_{\p \Omega}|^2)dS\\
&\le C \|P_\varphi(h,E) v\|_{L^2(\Omega)}^2 + \mathcal{O}(h^2)\|v\|_{H^1_{\text{scl}}(\Omega)}^2\\
& + Ch\int_{\Gamma} ( |\chi_jv|_{\p \Omega}|^2+|h \chi_j \nabla v|_{\p \Omega}|^2)dS+ \mathcal{O}(h^3) \int_{\Gamma}  |v|_{\p \Omega}|^2 dS.
\end{aligned}
\end{equation} 

When $\omega_j\cap(\p\Omega\setminus\Gamma)\ne \emptyset$, in view of \eqref{eq_107_1} and the fact that $\p_\nu (\chi_jv) -(\frac{\p_\nu \varphi}{h}+ik)(\chi_jv)=0$ on $\p \Omega$ , we can apply Corollary \ref{cor_BLR_local}, and obtain that  there exist  $0<h_0\le 1$ and $C>0$ such that for all  $0\le E\le 1$, $k\ge 1$, $0<h\le \frac{h_0}{k}$, 
\begin{equation}
\label{eq_107_3}
\begin{aligned}
h\|\chi_j v\|_{H^1_{\text{scl}}(\Omega)}^2&\le C \|P_\varphi(h,E) v \|_{L^2(\Omega)}^2 +C\|[P_\varphi(h,E),\chi_j]v\|_{L^2(\Omega)}^2 \\
&\le C \|P_\varphi(h,E) v \|_{L^2(\Omega)}^2 + \mathcal{O}(h^2)\|v\|_{H^1_{\text{scl}}(\Omega)}^2.
\end{aligned}
\end{equation} 
For the interior piece $\tilde \chi v$, the same estimate as \eqref{eq_107_3} holds. Summing up the estimates \eqref{eq_107_2} and \eqref{eq_107_3} and absorbing the error terms, we get \eqref{eq_thm_BLR}. This completes the proof. 
\end{proof}

We have the following corollary of Theorem \ref{thm_BLR}. 
\begin{cor}
\label{cor_BLR}
Let $\emptyset\ne \Gamma\subset \p \Omega$ be open and let  $\psi\in C^\infty(\overline{\Omega};\R)$ be such that $\psi(x)\ge 0$ for all $x\in \overline{\Omega}$ and $|\nabla \psi|>0$ in $\overline{\Omega}$, and \eqref{eq_105_3} holds. 
Let $\varphi=e^{\beta_0 \psi}$, $\beta_0\gg 1$. Then there exist   $0<h_0\le 1$ and $C>0$ such that for all $0\le E\le 1$, $k\ge 1$, $0<h\le \frac{h_0}{k}$ and all $u\in H^2(\Omega)$ satisfying 
$(\p_\nu -ik)u=0$ on $\p \Omega$, we have 
\begin{equation}
\label{eq_sec5_5}
\begin{aligned}
\int_{\Omega} e^{\frac{2\varphi}{h}} |P(h,E)u|^2 dx+h\int_{\Gamma} e^{\frac{2\varphi}{h}}(|u|_{\p \Omega}|^2&+|h\nabla_\tau u|_{\p \Omega}|^2+|h\p_\nu u|_{\p \Omega}|^2)dS\\
&\ge Ch\int_{\Omega} e^{\frac{2\varphi}{h}}(|u|^2+|h\nabla u|^2)dx. 
\end{aligned}
\end{equation}
\end{cor}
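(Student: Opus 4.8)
The corollary is simply the "Robin gauge-transform" version of Theorem~\ref{thm_BLR}, so the plan is to deduce it by conjugating with the exponential weight and removing the weight from the boundary condition. First I would start from a function $u\in H^2(\Omega)$ satisfying the genuine Robin condition $(\p_\nu-ik)u=0$ on $\p\Omega$, and set
\[
v=e^{\frac{\varphi}{h}}u\in H^2(\Omega).
\]
The point is to check which boundary condition $v$ inherits. Since $\p_\nu\bigl(e^{\frac{\varphi}{h}}u\bigr)=e^{\frac{\varphi}{h}}\bigl(\p_\nu u+\tfrac{\p_\nu\varphi}{h}u\bigr)$ on $\p\Omega$, we get
\[
\p_\nu v-\Bigl(\tfrac{\p_\nu\varphi}{h}+ik\Bigr)v
= e^{\frac{\varphi}{h}}\Bigl(\p_\nu u+\tfrac{\p_\nu\varphi}{h}u-\tfrac{\p_\nu\varphi}{h}u-iku\Bigr)
= e^{\frac{\varphi}{h}}(\p_\nu u-iku)=0,
\]
which is exactly the boundary condition required in Theorem~\ref{thm_BLR}. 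Hence Theorem~\ref{thm_BLR} applies to $v$, for all $0\le E\le 1$, $k\ge1$, $0<h\le h_0/k$, yielding \eqref{eq_thm_BLR} for this particular $v$.

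Next I would translate each term of \eqref{eq_thm_BLR} back in terms of $u$. By definition of $P_\varphi(h,E)=e^{\frac{\varphi}{h}}\circ P(h,E)\circ e^{-\frac{\varphi}{h}}$ we have $P_\varphi(h,E)v=e^{\frac{\varphi}{h}}P(h,E)u$, so the first term on the left becomes $\int_\Omega e^{\frac{2\varphi}{h}}|P(h,E)u|^2dx$. For the right-hand side, $v=e^{\frac{\varphi}{h}}u$ and $h\nabla v=e^{\frac{\varphi}{h}}(h\nabla u+(\nabla\varphi)u)$, so
\[
\|v\|_{H^1_{\text{scl}}(\Omega)}^2=\int_\Omega e^{\frac{2\varphi}{h}}\bigl(|u|^2+|h\nabla u+(\nabla\varphi)u|^2\bigr)dx.
\]
Since $|h\nabla u+(\nabla\varphi)u|^2\ge \tfrac12|h\nabla u|^2-|\nabla\varphi|^2|u|^2$ and $|\nabla\varphi|$ is bounded on $\overline\Omega$, one gets $\|v\|_{H^1_{\text{scl}}(\Omega)}^2\ge c\int_\Omega e^{\frac{2\varphi}{h}}(|u|^2+|h\nabla u|^2)dx$ for a new constant, which is the right-hand side of \eqref{eq_sec5_5}. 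Similarly the boundary terms on $\Gamma$: $v|_{\p\Omega}=e^{\frac{\varphi}{h}}u|_{\p\Omega}$, $h\nabla_\tau v|_{\p\Omega}=e^{\frac{\varphi}{h}}(h\nabla_\tau u+(\nabla_\tau\varphi)u)|_{\p\Omega}$, and $h\p_\nu v|_{\p\Omega}=e^{\frac{\varphi}{h}}(h\p_\nu u+(\p_\nu\varphi)u)|_{\p\Omega}$, so expanding the squares and again using the boundedness of $|\nabla\varphi|$ on $\overline\Omega$ bounds the $\Gamma$-integral in \eqref{eq_thm_BLR} from above by a constant times the $\Gamma$-integral in \eqref{eq_sec5_5} plus a lower-order term $\mathcal O(h)\int_\Gamma e^{\frac{2\varphi}{h}}|u|^2\,dS$, which is harmless.

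The only slightly delicate bookkeeping is making sure the error terms produced by expanding the weighted squares on $\Gamma$ do not spoil the inequality: the term $\mathcal O(h)\int_\Gamma e^{\frac{2\varphi}{h}}|u|^2dS$ appears with a favorable sign (on the same side as the data), so it is simply absorbed, and no reverse absorption is needed; thus there is no real obstacle here, it is a routine gauge computation. Collecting the three substitutions and relabeling the constant $C$ gives precisely \eqref{eq_sec5_5}, for all $0\le E\le1$, $k\ge1$, $0<h\le h_0/k$, which completes the proof of Corollary~\ref{cor_BLR}.
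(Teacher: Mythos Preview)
Your argument is correct and is exactly the standard gauge transformation the paper has in mind: the paper states Corollary~\ref{cor_BLR} as an immediate consequence of Theorem~\ref{thm_BLR} without writing out a proof, and the substitution $v=e^{\varphi/h}u$ together with the routine bookkeeping you describe is precisely how one passes from \eqref{eq_thm_BLR} to \eqref{eq_sec5_5}. One small point worth making explicit in your write-up: the lower bound $\|v\|_{H^1_{\text{scl}}(\Omega)}^2\ge c\int_\Omega e^{2\varphi/h}(|u|^2+|h\nabla u|^2)\,dx$ uses not only the inequality $|h\nabla u+(\nabla\varphi)u|^2\ge\tfrac12|h\nabla u|^2-|\nabla\varphi|^2|u|^2$ but also the fact that the $\|v\|_{L^2}^2$ piece already controls $\int e^{2\varphi/h}|u|^2\,dx$, so that the negative $-|\nabla\varphi|^2|u|^2$ contribution can be compensated; as written this step is slightly elliptic, though the conclusion is correct.
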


To use Corollary  \ref{cor_BLR}, we need  the following result on existence of a weight function with  special properties,   see \cite[Lemma 1.1]{Fursikov_Imanuvilov_1996},  \cite[Lemmas 2.1 and 2.3]{Imanuvilov_Yam_1998}.
\begin{thm}
\label{thm_weight}
Let $\emptyset\ne \Gamma\subset \p \Omega $ be an arbitrary open subset. Then there exists $\psi\in C^\infty(\overline{\Omega})$ such that 
\begin{align*}
&\psi(x)>0, \quad \forall x\in \Omega,\quad |\nabla \psi(x)|>0,\quad \forall x\in \overline{\Omega},\\
&\psi|_{\p\Omega\setminus \Gamma}=0, \quad    \p_\nu\psi|_{\p \Omega\setminus \Gamma}>0. 
\end{align*}
\end{thm}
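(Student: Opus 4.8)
The plan is to deduce Theorem~\ref{thm_weight} from the classical \emph{interior} version of the Fursikov--Imanuvilov weight lemma (\cite[Lemma 1.1]{Fursikov_Imanuvilov_1996}, \cite[Lemmas 2.1 and 2.3]{Imanuvilov_Yam_1998}) by enlarging the domain across $\Gamma$, so that the ``observation region'' on which the gradient is allowed to vanish is pushed entirely outside $\overline{\Omega}$. Concretely, first I would fix a nonempty open $\Gamma_1$ with $\overline{\Gamma_1}\subset\Gamma$ and an open $W\subset\p\Omega$ with $\overline{\Gamma_1}\subset W\subset\overline{W}\subset\Gamma$. Using boundary normal coordinates near $\p\Omega$, I attach to $\Omega$ a thin outward collar over $W$ whose thickness tapers smoothly to $0$ as one approaches the relative boundary $\p W$; this produces a bounded connected open set $\widehat{\Omega}\supset\Omega$ with $C^\infty$ boundary such that $\p\Omega\setminus W\subset\p\widehat{\Omega}$ (in particular $\p\Omega\setminus\Gamma\subset\p\widehat{\Omega}$), the inner unit normal to $\p\widehat{\Omega}$ agrees with the inner unit normal to $\p\Omega$ along $\p\Omega\setminus W$, and $\overline{\Omega}\subset\overline{\widehat{\Omega}}$. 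I then pick a nonempty open $\omega_1$ with $\overline{\omega_1}\subset\widehat{\Omega}\setminus\overline{\Omega}$, which is possible since the collar over $\Gamma_1$ has nonempty interior with closure disjoint from $\overline{\Omega}$; note $\omega_1\Subset\widehat{\Omega}$.

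Next, applying the cited interior weight lemma to the domain $\widehat{\Omega}$ with interior observation region $\omega_1$, I obtain $\widehat{\psi}\in C^\infty(\overline{\widehat{\Omega}})$ with $\widehat{\psi}>0$ in $\widehat{\Omega}$, $\widehat{\psi}|_{\p\widehat{\Omega}}=0$, and $|\nabla\widehat{\psi}|>0$ on $\overline{\widehat{\Omega}}\setminus\omega_1$; in particular $|\nabla\widehat{\psi}|>0$ on all of $\p\widehat{\Omega}$, and since $\widehat{\psi}>0$ inside while $\widehat{\psi}=0$ on $\p\widehat{\Omega}$, the nonvanishing gradient on $\p\widehat{\Omega}$ is parallel to the normal and points inward, i.e. $\p_\nu\widehat{\psi}>0$ on $\p\widehat{\Omega}$ for the inner normal. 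I claim $\psi:=\widehat{\psi}|_{\overline{\Omega}}$ satisfies all the requirements: it lies in $C^\infty(\overline{\Omega})$ because $\overline{\Omega}\subset\overline{\widehat{\Omega}}$; $\psi(x)=\widehat{\psi}(x)>0$ for $x\in\Omega\subset\widehat{\Omega}$; since $\overline{\omega_1}\cap\overline{\Omega}=\emptyset$ we have $\overline{\Omega}\subset\overline{\widehat{\Omega}}\setminus\omega_1$, hence $|\nabla\psi|>0$ on $\overline{\Omega}$; and since $\p\Omega\setminus\Gamma\subset\p\widehat{\Omega}$ we get $\psi|_{\p\Omega\setminus\Gamma}=0$ and, the inner normals agreeing there, $\p_\nu\psi|_{\p\Omega\setminus\Gamma}=\p_\nu\widehat{\psi}|_{\p\Omega\setminus\Gamma}>0$.

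The \emph{main obstacle} is really only bookkeeping: the construction of the extended domain $\widehat{\Omega}$ — gluing the tapered collar and smoothing it near $\p W$ while leaving $\p\Omega\setminus\Gamma$ intact as a piece of $\p\widehat{\Omega}$ — is a routine but slightly delicate exercise in differential topology, and all the substantive work (the removal of critical points of the weight) is packaged into the cited interior lemma.

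For completeness I would also record the alternative, hands-on route, in case one prefers not to invoke domain extension: start from any function that is positive in $\Omega$, vanishes to first order with positive inner normal derivative on $\p\Omega\setminus\Gamma$, and is bounded away from $0$ near $\Gamma$; after a Morse perturbation leaving a neighborhood of $\p\Omega\setminus\Gamma$ unchanged it has finitely many interior critical points; join each of them to a point of $\Gamma$ by pairwise disjoint smooth arcs meeting $\p\Omega$ transversally only at their endpoints; then add a perturbation supported in a thin tubular neighborhood of these arcs, built so as to be $C^0$-small (to preserve positivity in $\Omega$) yet with controlled nonvanishing gradient along the tube (to cancel the old critical points and create no new ones). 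Here the obstacle is precisely the simultaneous control of the $C^0$-size and of the gradient of the perturbation inside the tube, which is exactly the point the domain-extension argument sidesteps.
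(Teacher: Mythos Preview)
The paper does not supply its own proof of Theorem~\ref{thm_weight}; it simply refers to \cite[Lemma 1.1]{Fursikov_Imanuvilov_1996} and \cite[Lemmas 2.1, 2.3]{Imanuvilov_Yam_1998}. Your domain-extension argument---attach a smooth outward collar over a set $W$ with $\overline W\subset\Gamma$, place the interior observation set $\omega_1$ entirely in the collar, apply the interior Fursikov--Imanuvilov lemma on $\widehat\Omega$, and restrict---is exactly the standard derivation of the boundary-observation weight from the interior one (this is essentially what \cite[Lemma 2.3]{Imanuvilov_Yam_1998} does), and it is correct as written. The only point to be tidy about, which you already flag, is that the collar height $h\in C^\infty(\p\Omega)$ should be chosen with $h>0$ on $W$, $h\equiv 0$ on $\p\Omega\setminus W$, and $h$ small enough that the graph $\{t=-h(y)\}$ stays inside a tubular neighborhood of $\p\Omega$; then $\p\widehat\Omega$ is the smooth graph of $-h$ in boundary normal coordinates and coincides with $\p\Omega$ on $\p\Omega\setminus W\supset\p\Omega\setminus\Gamma$, so no genuine smoothing issue arises.
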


\section{Consequences of Carleman estimates}

\label{sec_consequences}

Let $\Omega\subset \R^n$, $n\ge 3$, be a bounded domain with $C^\infty$ boundary.   Let $\omega_j \subset \Omega$  be neighborhoods of $\p \Omega$ with $C^\infty$ boundaries such that 
$\p \Omega\subset \p \omega_j$, $j=0,1,2,3$, and $\overline{\omega_{j}}\subset \omega_{j-1}$, $j=1,2,3$. Let $\emptyset\ne \Gamma\subset \p \Omega$ be an arbitrary non-empty open set.

Let  $q\in L^\infty(\Omega;\R)$, $k\ge 1$, and let $u\in H^2(\Omega)$ be such that 
\begin{equation}
\label{eq_sec5_1}
\begin{aligned}
&(-\Delta-k^2+q)u=0\quad \text{in}\quad \omega_0,\\
&(\p_\nu -ik)u=0 \quad \text{on}\quad \p \Omega.
\end{aligned}
\end{equation}

We have the following result in the case of Robin boundary conditions which is an analog of    \cite[Lemma 2.4]{Ben_Joud_2009}, obtained in the case of Dirichlet boundary conditions,  see also \cite{Bellass_Choulli_2009}. \begin{prop} 
\label{prop_based_on_Carleman_est}
There are constants $0<h_0\le 1$,  $C>0$, $\alpha_1>0$, and $\alpha_2>0$ such that for all $k\ge 1$, $0<h\le \frac{h_0}{k}$ and all $u\in H^2(\Omega)$ satisfying \eqref{eq_sec5_1}, we have 
\begin{equation}
\label{eq_based_on_Carleman_est}
\|u\|_{H^1(\omega_2\setminus\overline{\omega_3})}\le  C \big(e^{-\frac{\alpha_1}{h} } \|u\|_{H^1(\Omega)}+ e^{ \frac{\alpha_2}{ h}}\|u|_{\p \Omega}\|_{H^1(\Gamma)}\big).
\end{equation}
\end{prop}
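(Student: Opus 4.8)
The plan is to derive \eqref{eq_based_on_Carleman_est} from the global boundary Carleman estimate in Corollary \ref{cor_BLR} by a standard conjugation-cutoff argument, taking advantage of the fact that $u$ satisfies the Schr\"odinger equation in the \emph{full} collar neighborhood $\omega_0$ of $\p\Omega$ (so no interior information is needed), and exploiting the sign of $\psi$ to convert the weighted estimate into a separation of scales between the ``bad'' set near $\p\Omega$ (where the weight is large) and the ``good'' set further inside (where the weight is small). First I would invoke Theorem \ref{thm_weight} to fix a weight $\psi\in C^\infty(\overline\Omega)$ with $\psi>0$ in $\Omega$, $|\nabla\psi|>0$ on $\overline\Omega$, $\psi|_{\p\Omega\setminus\Gamma}=0$ and $\p_\nu\psi|_{\p\Omega\setminus\Gamma}>0$; then $\varphi=e^{\beta_0\psi}$ with $\beta_0\gg1$ is a Carleman weight satisfying the hypotheses of Corollary \ref{cor_BLR}. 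Since $\psi$ is continuous and strictly positive on the compact set $\overline{\omega_2\setminus\omega_3}$ (which is contained in the interior $\Omega$), there are constants $0<a<b$ with $a\le\psi\le b$ on $\overline{\omega_0}$ and, crucially, $\psi\ge c_0>0$ on $\overline{\omega_2\setminus\omega_3}$ while $\psi$ stays bounded above by $c_1$ on $\overline{\omega_2}$, with the key inequality being $c_0$ (the value governing $\omega_2\setminus\omega_3$) strictly larger than the sup of $\psi$ on, say, $\p\omega_3$—this is where one has to choose the nested neighborhoods $\omega_1,\omega_2,\omega_3$ carefully so that level sets of $\psi$ separate them.

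Next I would pick a cutoff $\chi\in C^\infty_0(\omega_0)$ with $\chi\equiv1$ on $\omega_1$, so that $\supp(\nabla\chi)\subset\omega_0\setminus\overline{\omega_1}$, and set $v=e^{\varphi/h}\chi u$ after first arranging, as in the statement of Corollary \ref{cor_BLR}, that the Robin condition passes through the conjugation: the function $u$ satisfies $(\p_\nu-ik)u=0$ on $\p\Omega$, exactly the boundary condition required in \eqref{eq_sec5_5}. Applying \eqref{eq_sec5_5} to $u$ (more precisely to $\chi u$, noting $\chi\equiv1$ near $\p\Omega$ so the boundary terms are unchanged and $\chi u\in H^2(\Omega)$), with $E=k^2h^2\in[0,1]$ since $h\le h_0/k$, I get
\[
\int_\Omega e^{\frac{2\varphi}{h}}|P(h,k^2h^2)(\chi u)|^2\,dx + h\int_\Gamma e^{\frac{2\varphi}{h}}\big(|u|^2+|h\nabla_\tau u|^2+|h\p_\nu u|^2\big)dS \ge Ch\int_\Omega e^{\frac{2\varphi}{h}}\big(|\chi u|^2+|h\nabla(\chi u)|^2\big)dx.
\]
Now $P(h,k^2h^2)(\chi u)=h^2(-\Delta-k^2)(\chi u)=\chi\cdot h^2(-\Delta-k^2)u+[{-h^2\Delta},\chi]u = -h^2\chi q u + [{-h^2\Delta},\chi]u$ using the equation in $\omega_0$; the first term is $O(h^2)\chi u$ and the second is supported in $\omega_0\setminus\overline{\omega_1}$ and is $O(h)\cdot\|u\|_{H^1}$ locally. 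The $O(h^2)\chi u$ term is absorbed into the right-hand side for $h$ small; the commutator term, being supported where $\varphi\le e^{\beta_0 b'}$ with $b'=\sup_{\omega_0\setminus\omega_1}\psi$, contributes at most $e^{\frac{2}{h}e^{\beta_0 b'}}\cdot O(1)\|u\|_{H^1(\Omega)}^2$.

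On the left, the boundary integral over $\Gamma$ is bounded by $e^{\frac{2}{h}\sup_{\Gamma}\varphi}\|u|_{\p\Omega}\|_{H^1(\Gamma)}^2$ (absorbing the $h\p_\nu u$ term via $\p_\nu u=iku$ and $h\le h_0/k$, and controlling $\|h\nabla_\tau u\|$, $\|h\p_\nu u\|$ by $\|u|_{\p\Omega}\|_{H^1(\Gamma)}$). On the right I keep only the integral over $\omega_2\setminus\overline{\omega_3}$, where $\varphi\ge e^{\beta_0 c_0}=:e^{2\alpha}$ for a suitable $\alpha>0$, giving a lower bound $Ch\, e^{\frac{2}{h}e^{\beta_0 c_0}}\|u\|_{H^1(\omega_2\setminus\overline{\omega_3})}^2$—here I use that $\chi\equiv1$ on $\omega_1\supset\omega_2$. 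Rearranging, dividing by $h\,e^{\frac{4\alpha}{h}}$, taking square roots, and using that the commutator/bad-boundary exponents satisfy $b'< c_0$ and $\sup_\Gamma\psi$ is just some finite number (giving exponents $e^{-\alpha_1/h}$ and $e^{\alpha_2/h}$ after relabelling, the gap $\alpha_1=e^{\beta_0 c_0}-e^{\beta_0 b'}>0$ being strictly positive), we obtain exactly \eqref{eq_based_on_Carleman_est}.

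The main obstacle, and the only genuinely non-routine point, is the geometric bookkeeping of the weight: one must choose the nested collars $\omega_0\supset\omega_1\supset\omega_2\supset\omega_3$ so that the supremum of $\psi$ on the ``loss region'' $\overline{\omega_0\setminus\omega_1}$ (where the commutator $[h^2\Delta,\chi]$ lives) is strictly smaller than the infimum of $\psi$ on the ``gain region'' $\overline{\omega_2\setminus\omega_3}$, since $\psi$ vanishes on $\p\Omega\setminus\Gamma$ and only grows as one moves inward. Because $\psi$ is a fixed smooth function with $|\nabla\psi|>0$ and $\psi|_{\p\Omega\setminus\Gamma}=0$, its level sets foliate a neighborhood of $\p\Omega\setminus\Gamma$ and one can simply take the $\omega_j$ to be sublevel-set neighborhoods $\{\psi<t_j\}\cup(\text{piece near }\Gamma)$ with $t_0>t_1>t_2>t_3>0$; then $b'<t_1<t_2<c_0$ is automatic. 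The contribution of $\Gamma$ to the weight is harmless because $\varphi$ is simply bounded there (by $e^{\beta_0\|\psi\|_\infty}$), producing the $e^{\alpha_2/h}$ prefactor with no constraint other than $\alpha_2<\infty$. Everything else—the conjugation identity, absorbing $O(h^2)$ potential terms, estimating commutators of $h^2\Delta$ with a cutoff in the scaled norm, and reducing $\p_\nu u$ to $u$ on the boundary via the Robin condition and $hk\le h_0$—is standard.
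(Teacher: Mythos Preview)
Your argument has a genuine gap at precisely the point you flag as the ``only genuinely non-routine'' one: the sign of the weight comparison is backwards. With $\psi$ taken from Theorem~\ref{thm_weight} on $\overline{\Omega}$, you have $\psi|_{\p\Omega\setminus\Gamma}=0$, $\psi>0$ in $\Omega$, and $|\nabla\psi|>0$, so $\psi$ \emph{increases} as one moves inward from $\p\Omega\setminus\Gamma$. But in the nested collars $\omega_0\supset\omega_1\supset\omega_2\supset\omega_3$, the commutator region $\omega_0\setminus\overline{\omega_1}$ (where $\supp\nabla\chi$ lives) is \emph{farther} from $\p\Omega$ than the gain region $\omega_2\setminus\overline{\omega_3}$. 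Hence $\psi$ is larger on the loss region than on the gain region, not smaller. Concretely, with your own sublevel-set choice $\omega_j\sim\{\psi<t_j\}$, $t_0>t_1>t_2>t_3$, one has $\omega_0\setminus\omega_1=\{t_1\le\psi<t_0\}$ so $b'=\sup\psi\approx t_0$, while $\omega_2\setminus\omega_3=\{t_3\le\psi<t_2\}$ so $c_0=\inf\psi=t_3$; thus $b'>c_0$ and the claimed gap $\alpha_1=e^{\beta_0 c_0}-e^{\beta_0 b'}$ is negative. The argument therefore does not close.

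The paper resolves this by applying Corollary~\ref{cor_BLR} on the domain $\omega_0$ rather than on $\Omega$, and by invoking Theorem~\ref{thm_weight} on $\overline{\omega_0}$. The weight $\psi$ is then required to vanish on all of $\p\omega_0\setminus\Gamma$, which now includes the \emph{inner} boundary component $\p\omega_0\setminus\p\Omega$. Consequently $\psi$ is small (say $\le\kappa$) on a neighborhood $\omega'$ of that inner boundary, and one chooses the cutoff $\theta$ so that $[\Delta,\theta]$ is supported in $\omega'$, while $\psi\ge 2\kappa$ on the compact interior set $\overline{\omega_2\setminus\omega_3}\subset\omega_0$. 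This yields $\alpha_1=e^{2\beta_0\kappa}-e^{\beta_0\kappa}>0$ automatically. The remaining steps in your sketch---absorbing the $h^2q$ perturbation, bounding the $\Gamma$-boundary term by $e^{\frac{2}{h}e^{\beta_0\|\psi\|_{L^\infty}}}\|u|_{\p\Omega}\|_{H^1(\Gamma)}^2$, using $h\p_\nu u=ihku$ with $hk\le h_0$---are correct and match the paper once the Carleman estimate is run on $\omega_0$ with this weight.
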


\begin{proof}
Letting $h>0$, we rewrite  \eqref{eq_sec5_1} semiclassically as follows, 
\begin{equation}
\label{eq_sec5_2}
\begin{aligned}
&(-h^2\Delta- (hk)^2+h^2q)u=0\quad \text{in}\quad \omega_0,\\
&(\p_\nu -ik)u=0 \quad \text{on}\quad \p \Omega.
\end{aligned}
\end{equation}
Thanks to Theorem \ref{thm_weight} there exists $\psi\in C^\infty(\overline{\omega_0})$ such that 
\begin{equation}
\label{eq_sec5_100}
\begin{aligned}
\psi(x)>0, \quad \forall x\in \omega_0, \quad |\nabla \psi(x)|>0,\quad \forall x\in \overline{\omega_0},\\
\psi(x)=0,\quad \forall x\in \p \omega_0\setminus \Gamma,\quad \p_\nu\psi|_{\p \omega_0\setminus\Gamma}>0. 
\end{aligned}
\end{equation}
Let 
\begin{equation}
\label{eq_sec5_101}
\varphi=e^{\beta_0 \psi},
\end{equation}
 with $\beta_0>0$ sufficiently large as in Corollary \ref{cor_BLR}. 
 
 We shall now follow \cite{Ben_Joud_2009}, \cite{Bellass_Choulli_2009} closely. We observe that the fact $\psi(x)>0$, for all $x\in \omega_0$, implies that there exists $\kappa>0$ such that 
\begin{equation}
\label{eq_sec5_6}
\psi(x)\ge 2\kappa,\quad \forall x\in \omega_2\setminus\omega_3,
\end{equation}
and  the fact that  $\psi(x)=0$ for all  $x\in\p\omega_0\setminus \Gamma$ gives that there is a neighborhood $ \omega'$ of $\p \omega_0\setminus\p \Omega$ such that $ \omega'\cap\overline{\omega_1}=\emptyset$ and 
\begin{equation}
\label{eq_sec5_7}
\psi(x)\le \kappa, \quad \forall x\in \omega'. 
\end{equation}
Let $\omega''\subset \omega'$ be an arbitrary fixed neighborhood of $\p \omega_0\setminus\p \Omega$, and let $\theta\in C^\infty(\overline{\omega_0})$ such that $0\le \theta\le 1$,   $\theta=0$ on  $\omega''$ and $\theta=1$ on $\omega_0\setminus {\omega'}$. Setting $v=\theta u$, where $u$ satisfies \eqref{eq_sec5_2}, we get that $v$ satisfies the following problem,
\begin{equation}
\label{eq_sec5_102}
\begin{aligned}
&(-h^2\Delta-(hk)^2+h^2q)v=[-h^2\Delta, \theta]u\quad \text{in}\quad \omega_0,\\
&(\p_\nu -ik)v=0 \quad \text{on}\quad \p \omega_0.
\end{aligned}
\end{equation}
Applying the Carleman estimate \eqref{eq_sec5_5} for the operator $P(h,(kh)^2)=-h^2\Delta-(hk)^2$ on the domain $\omega_0$, with the Carleman weight $\varphi$ given by \eqref{eq_sec5_101}, \eqref{eq_sec5_100}, and  $v\in H^2(\omega_0)$ satisfying \eqref{eq_sec5_102}, we obtain that  there exist $0<h_0\le 1$ and $C>0$, such that for all $k\ge 1$,  $0<h\le \frac{h_0}{k}$,  
\begin{equation}
\label{eq_sec5_103}
\begin{aligned}
Ch\int_{\omega_0} e^{\frac{2\varphi}{h}}(|v|^2+|h\nabla v|^2)dx&\le 
\int_{\omega_0} e^{\frac{2\varphi}{h}} | P(h,(hk)^2)v |^2 dx\\
&+h\int_{\Gamma} e^{\frac{2\varphi}{h}}(|v|_{\p \omega_0}|^2+|h\nabla_\tau v|_{\p\omega_0}|^2)dS.
\end{aligned}
\end{equation}
Perturbing \eqref{eq_sec5_103} by $h^2q$ and using \eqref{eq_sec5_102}, we get 
\begin{equation}
\label{eq_sec5_104}
\begin{aligned}
Ch\int_{\omega_0} e^{\frac{2\varphi}{h}}(|v|^2+|h\nabla v|^2)dx&\le 
\int_{\omega_0} e^{\frac{2\varphi}{h}} | [-h^2\Delta, \theta]u |^2 dx\\
&+h\int_{\Gamma} e^{\frac{2\varphi}{h}}(|v|_{\p \omega_0}|^2+|h\nabla_\tau v|_{\p\omega_0}|^2)dS.
\end{aligned}
\end{equation}

As $\theta=1$ on $\omega_2\setminus \omega_3$ and in view of  \eqref{eq_sec5_6}, we get 
\begin{equation}
\label{eq_sec5_9}
Ch\int_{\omega_0} e^{\frac{2\varphi}{h}}(|v|^2+|h\nabla v|^2)dx\ge Ch e^{\frac{2}{h}e^{2\beta_0\kappa}}\|u\|^2_{H^1_{\text{scl}}(\omega_2\setminus\overline{\omega_3})}.
\end{equation}
Using that $ [-h^2\Delta, \theta]$ is a first order semiclassical differential operator such that  
\[
\supp ( [-h^2\Delta, \theta])\subset \omega'\setminus  \omega'',
\]
 and \eqref{eq_sec5_7}, we obtain that 
\begin{equation}
\label{eq_sec5_10}
\int_{\omega_0} e^{\frac{2\varphi}{h}} |  [-h^2\Delta, \theta]u |^2 dx\le e^{\frac{2}{h}e^{\beta_0\kappa}}h^2\|u\|^2_{H^1_{\text{scl}}( \omega'\setminus  \omega'')}.
\end{equation}
Finally, 
\begin{equation}
\label{eq_sec5_11}
h\int_{\Gamma} e^{\frac{2\varphi}{h}}(|v|_{\p \omega_0}|^2+|h\nabla_\tau v|_{\p \omega_0}|^2)dS\le h e^{\frac{2}{h}e^{\beta_0\|\psi\|_{L^\infty}}}\|u|_{\p \Omega}\|^2_{H^1_{\text{scl}}(\Gamma)}.
\end{equation}
Putting \eqref{eq_sec5_9}, \eqref{eq_sec5_10} and \eqref{eq_sec5_11} together, in view of \eqref{eq_sec5_104}, we have
\[
Ch e^{\frac{2}{h}e^{2\beta_0\kappa}}\|u\|^2_{H^1_{\text{scl}}(\omega_2\setminus\overline{\omega_3})}\le e^{\frac{2}{h}e^{\beta_0\kappa}}h^2\|u\|^2_{H^1_{\text{scl}}( \omega'\setminus  \omega'')}+ h e^{\frac{2}{h}e^{\beta_0\|\psi\|_{L^\infty}}}\|u|_{\p \Omega}\|^2_{H^1_{\text{scl}}(\Gamma)}.
\]
Setting 
\[
\alpha_1=e^{2\beta_0\kappa}-e^{\beta_0\kappa}>0, \quad \alpha_2=e^{\beta_0\|\psi\|_{L^\infty}}-e^{2\beta_0\kappa}>0,
\]
we get 
\[
\|u\|^2_{H^1_{\text{scl}}(\omega_2\setminus\overline{\omega_3})}\le  C \big(e^{-\frac{2\alpha_1}{h}}h \|u\|^2_{H^1_{\text{scl}}(\Omega)}+ e^{\frac{2\alpha_2}{h}}\|u|_{\p \Omega}\|^2_{H^1_{\text{scl}}(\Gamma)}\big),
\]
and therefore, 
\[
\|u\|_{H^1_{\text{scl}}(\omega_2\setminus\overline{\omega_3})}\le  C \big(e^{-\frac{\alpha_1}{h}}h^{\frac{1}{2}} \|u\|_{H^1(\Omega)}+ e^{\frac{\alpha_2}{h}}\|u|_{\p \Omega}\|_{H^1(\Gamma)}\big).
\]
Passing to the non-semiclassical $H^1$--norm and replacing $\alpha_1$, $\alpha_2$, by $\frac{\alpha_1}{2}$, $2\alpha_2$, respectively,  we obtain that 
\[
\|u\|_{H^1(\omega_2\setminus\overline{\omega_3})}\le  C \big(e^{-\frac{\alpha_1}{h}} \|u\|_{H^1(\Omega)}+ e^{\frac{\alpha_2}{h}}\|u|_{\p \Omega}\|_{H^1(\Gamma)}\big),
\]
for all $k\ge 1$,  $0<h\le \frac{h_0}{ k}$, and some $\alpha_1,\alpha_2>0$ independent of $h$ and $k$.  Thus,  the bound \eqref{eq_based_on_Carleman_est}
follows. This completes the proof of the proposition. 
\end{proof}

\section{Proof of Theorem \ref{thm_main}}
\label{sec_proof_thm_main}

We shall first follow the approach of \cite{Ben_Joud_2009}. Let $k\ge 1$ and let $u_2\in H^2(\Omega)$ be a solution to 
\begin{equation}
\label{eq_sec4_1}
(-\Delta-k^2+q_2)u_2=0\quad \text{in}\quad \Omega.
\end{equation}
In the sequel, we shall choose $u_2$ to be a complex geometric optics solution to \eqref{eq_sec4_1}.
Let $v\in H^1(\Omega)$ be the solution to the following problem
\begin{align*}
&(-\Delta-k^2+q_1)v=0\quad \text{in}\quad \Omega,\\
&(\p_\nu -ik)v=(\p_\nu -ik)u_2 \quad \text{on}\quad \p \Omega.
\end{align*} 
Then by the a priori estimate \eqref{eq_sec2_1_a_priori}, we conclude that $v\in H^2(\Omega)$.

Setting $u=v-u_2\in H^2(\Omega)$, we see that $u$ satisfies the following problem, 
\begin{equation}
\label{eq_sec4_1_-1}
\begin{aligned}
&(-\Delta-k^2+q_1)u=(q_2-q_1)u_2\quad \text{in}\quad \Omega,\\
&(\p_\nu -ik)u=0 \quad \text{on}\quad \p \Omega.
\end{aligned}
\end{equation}

Let us assume that $\omega_0\subset \Omega$ is a neighborhood of $\p \Omega$ with $C^\infty$ boundary where $q_1=q_2$. Now let $\omega_j\subset \Omega$ be neighborhoods of $\p \Omega$ with $C^\infty$ boundaries such that $\p \Omega\subset \p \omega_j$, $j=1,2,3$, and $\overline{\omega_j}\subset \omega_{j-1}$, $j=1,2,3$.  Let $\chi\in C^\infty_0(\Omega)$ be a cutoff function satisfying $0\le \chi\le 1$, $\chi=0$ on $\omega_3$ and $\chi=1$ on $\Omega\setminus \omega_2$. We set $\tilde u=\chi u$. Thus, we have
\begin{equation}
\label{eq_sec4_2}
(-\Delta-k^2+q_1)\tilde u=\chi(q_1-q_2)u_2+[-\Delta, \chi]u=(q_1-q_2)u_2+[-\Delta, \chi]u\quad \text{in}\quad \Omega.
\end{equation}

Let $u_1\in H^2(\Omega)$ be a solution to 
\begin{equation}
\label{eq_sec4_3}
(-\Delta-k^2+q_1) u_1=0\quad \text{in}\quad \Omega.
\end{equation}
Multiplying \eqref{eq_sec4_2} by $u_1$ and integrating by parts, we get 
\begin{equation}
\label{eq_sec4_4}
\int_\Omega (q_1-q_2)u_1u_2dx+\int_{\Omega}  [-\Delta, \chi]u u_1dx=0. 
\end{equation}
Now $[-\Delta, \chi]$ is a first order differential operator with $\supp( [-\Delta, \chi])\subset \overline{\omega_2}\setminus \omega_3$, and hence, we obtain that 
\begin{equation}
\label{eq_sec4_5}
\bigg| \int_{\Omega}  [-\Delta, \chi]u u_1dx \bigg|\le \|[-\Delta, \chi] u \|_{L^2(\omega_2\setminus \omega_3)} \|u_1 \|_{L^2(\Omega)}\le C\|u\|_{H^1(\omega_2\setminus\overline{\omega_3})}  \|u_1 \|_{L^2(\Omega)}. 
\end{equation}

Now it follows from \eqref{eq_sec4_4} and \eqref{eq_sec4_5} with the help of \eqref{eq_based_on_Carleman_est} that there are constants $0<h_0\le 1$,   $\alpha_1>0$, $\alpha_2>0$,  such that for $k\ge 1$, $0<h\le \frac{h_0}{k}$,  
\begin{equation}
\label{eq_sec4_6}
\bigg| \int_\Omega (q_1-q_2)u_1u_2dx\bigg| \le C\big( e^{-\frac{\alpha_1}{ h}} \|u\|_{H^1(\Omega)}  \|u_1 \|_{L^2(\Omega)}  + e^{ \frac{\alpha_2}{h} }\|u|_{\Gamma}\|_{H^1(\Gamma)}  \|u_1 \|_{L^2(\Omega)} \big).
\end{equation}

We have
\[
u|_{\Gamma}=v|_{\Gamma}-u_2|_{\Gamma}= \big(\Lambda_{q_1}^\Gamma(k)-\Lambda_{q_2}^\Gamma(k)\big)
\big( (\p_\nu-ik)u_2|_{\p \Omega}  \big).
\]
Using the mapping properties of the partial  Robin--to--Dirichlet map \eqref{eq_int_mapping_prop} and the trace theorem, we get 
\begin{equation}
\label{eq_sec4_7}
\begin{aligned}
\|u|_{\Gamma}\|_{H^1(\Gamma)}\le C\| \Lambda_{q_1}^\Gamma(k)-\Lambda_{q_2}^\Gamma(k) \|_{L^2(\p \Omega)\to H^1(\Gamma)}\| (\p_\nu-ik)u_2|_{\p \Omega} \|_{L^2(\p \Omega)}\\
\le  C\| \Lambda_{q_1}^\Gamma(k)-\Lambda_{q_2}^\Gamma(k) \|_{L^2(\p \Omega)\to H^1(\Gamma)} k\|u_2\|_{H^2(\Omega)}.
\end{aligned}
\end{equation}
Let us now bound $\|u\|_{H^1(\Omega)}$ in  \eqref{eq_sec4_6}. To that end, we recall that $u$ satisfies \eqref{eq_sec4_1_-1} and use Theorem \ref{thm_Baskin}. We get that there is $C>0$ such that for all $k\ge 1$,
\[
\|\nabla u\|_{L^2(\Omega)}+k\|u\|_{L^2(\Omega)}\le C(\|q_1\|_{L^\infty(\Omega)}\|u\|_{L^2(\Omega)}+\|q_1-q_2\|_{L^\infty(\Omega)}\|u_2\|_{L^2(\Omega)}), 
\]
and hence, for $k\ge 1$ sufficiently large, we obtain that  
 \[
\|\nabla u\|_{L^2(\Omega)}+\frac{k}{2}\|u\|_{L^2(\Omega)}\le C\|u_2\|_{L^2(\Omega)}.
\]
This implies in particular  that for $k\ge k_0\gg 1$, 
\begin{equation}
\label{eq_sec4_8}
\|u\|_{H^1(\Omega)}\le C\|u_2\|_{L^2(\Omega)}.
\end{equation}
By Proposition \ref{prop_uniform_bounded_k}, we have that \eqref{eq_sec4_8} is also valid for $k\in [1,k_0]$.

It follows from \eqref{eq_sec4_6},  \eqref{eq_sec4_7} and \eqref{eq_sec4_8} that for all $k\ge  1$,  $0<h\le \frac{h_0}{k}$, 
\begin{equation}
\label{eq_sec4_9}
\begin{aligned}
\bigg| \int_\Omega (q_1-q_2)u_1&u_2dx\bigg| \le C\big( e^{-\frac{\alpha_1}{ h}} \|u_2\|_{L^2(\Omega)}  \|u_1 \|_{L^2(\Omega)} \\
 &+ e^{ \frac{\alpha_2}{ h}}\| \Lambda_{q_1}^\Gamma(k)-\Lambda_{q_2}^\Gamma(k) \|_{L^2(\p \Omega)\to H^1(\Gamma)} \|u_2\|_{H^2(\Omega)} \|u_1 \|_{L^2(\Omega)} \big),
\end{aligned}
\end{equation}
for any $u_1,u_2\in H^2(\Omega)$ satisfying 
\begin{equation}
\label{eq_sec4_10}
(-\Delta-k^2+q_1)u_1=0, \quad (-\Delta-k^2+q_2)u_2=0,  \quad \text{in}\quad \Omega,
\end{equation}
respectively.  Here we used that $k\le \frac{1}{h}$ and replace $\alpha_2$ by $\alpha_2+1$.

Next let  $\tilde \Omega$ be open such that   $\Omega\subset\subset\tilde \Omega\subset\subset \R^n$, and let us extend $q_1$ and $q_2$ by zero to $\R^n\setminus \Omega$ and denote the extensions by $q_1$ and $q_2$ again. We shall take $u_1$ and $u_2$ to be complex geometric optics solutions constructed in Proposition \ref{prop_cgo_Hahner} on $\tilde \Omega$, and insert them into \eqref{eq_sec4_9}. To that end, let $\xi\in \R^n$ and $\mu_1, \mu_2\in \R^n$ be such that $|\mu_1|=|\mu_2|=1$ and $\mu_1\cdot\mu_2=\mu_1\cdot\xi=\mu_2\cdot\xi=0$. We set 
\[
\zeta_1=-\frac{\xi}{2}+\sqrt{k^2+a^2-\frac{|\xi|^2}{4}}\mu_1+ia \mu_2, \quad \zeta_2=-\frac{\xi}{2}-\sqrt{k^2+a^2-\frac{|\xi|^2}{4}}\mu_1-ia \mu_2,
\]
where $a$ is such that 
\begin{equation}
\label{eq_sec4_11_-1}
|\Im\zeta_j|=a\ge \max\{C_0M, 1\}
\end{equation}
 and $k^2+a^2\ge \frac{|\xi|^2}{4}$, see \cite{INUW_2014}.  Then we have $\zeta_j\cdot\zeta_j=k^2$, and by Propositions \ref{prop_cgo_Hahner} and \ref{prop_regularity_of_CGO}, there are solutions $u_j\in H^2(\Omega)$
to \eqref{eq_sec4_10} of the form
\begin{equation}
\label{eq_sec4_11}
u_j(x)=e^{i\zeta_j\cdot x}(1+r_j),
\end{equation}
where 
\begin{equation}
\label{eq_sec4_12}
\|r_j\|_{L^2(\tilde \Omega)}\le \frac{C_1}{a}\|q_j\|_{L^\infty(\Omega)},\quad j=1,2.
\end{equation}
Furthermore, thanks to Proposition \ref{prop_regularity_of_CGO}, for $k\ge 1$, we have
\begin{equation}
\label{eq_sec4_13}
\|u_j\|_{H^2(\Omega)}\le Ck^2\|u_j\|_{L^2(\tilde \Omega)}.
\end{equation}
In view of \eqref{eq_sec4_13}, \eqref{eq_sec4_9} with $u_j$ being geometric optics solutions \eqref{eq_sec4_11}, has the form  for all $k\ge 1$, $0<h\le \frac{h_0}{k}$,  
\begin{equation}
\label{eq_sec4_14}
\begin{aligned}
\bigg| \int_\Omega (q_1-q_2)u_1&u_2dx\bigg| \le C\big( e^{-\frac{\alpha_1}{ h}} \|u_2\|_{L^2(\Omega)}  \|u_1 \|_{L^2(\Omega)} \\
 &+ e^{\frac{ \alpha_2}{ h}}\| \Lambda_{q_1}^\Gamma(k)-\Lambda_{q_2}^\Gamma(k) \|_{L^2(\p \Omega)\to H^1(\Gamma)} \|u_2\|_{L^2(\tilde \Omega)} \|u_1 \|_{L^2(\Omega)} \big),
\end{aligned}
\end{equation}
where we have replaced $\alpha_2$ by $\alpha_2+1$, say. Let $R>0$ be such that $\tilde \Omega$ is contained in a ball centered at zero of radius $R$. Then 
thanks to \eqref{eq_sec4_11},  \eqref{eq_sec4_12}, and \eqref{eq_sec4_11_-1}, we have
\begin{equation}
\label{eq_sec4_15}
\|u_j\|_{L^2(\tilde \Omega)}\le Ce^{aR},\quad j=1,2.
\end{equation}
It follows from \eqref{eq_sec4_14} with the help of \eqref{eq_sec4_15} and  \eqref{eq_sec4_12} that 
\begin{equation}
\label{eq_sec4_16}
\begin{aligned}
\bigg| \int_\Omega (q_1-q_2) &e^{-i\xi\cdot x}dx\bigg| \\
&\le C\big( e^{-\frac{\alpha_1}{ h}} e^{2aR } + e^{ \frac{\alpha_2}{ h}} e^{2aR}\| \Lambda_{q_1}^\Gamma(k)-\Lambda_{q_2}^\Gamma(k) \|_{L^2(\p \Omega)\to H^1(\Gamma)} \big)+\frac{C}{a},
\end{aligned}
\end{equation}
for all $k\ge 1$, $0<h\le \frac{h_0}{k}$,  $\xi\in \R^n$ such that $|\xi|\le 2\sqrt{k^2+a^2}$ and $a\ge \max\{C_0M, 1\}$.

We shall take $\frac{1}{h}=\gamma a$ in \eqref{eq_sec4_16} and choose the constant $\gamma>0$ sufficiently large so that 
\begin{equation}
\label{eq_thm_main_0_7}
e^{-\alpha_1\gamma a+2a R}\le e^{-\alpha_3 a},\quad e^{\alpha_2 \gamma a +2aR}\le e^{\alpha_4 a},
\end{equation}
 for some constants $\alpha_3>0$ and $\alpha_4>0$. This implies that  $a\ge \frac{1}{h_0\gamma} k$.

Letting $\delta=\| \Lambda_{q_1}^\Gamma(k)-\Lambda_{q_2}^\Gamma(k) \|_{L^2(\p \Omega)\to H^1(\Gamma)} $, let us write  \eqref{eq_sec4_16} as
\begin{equation}
\label{eq_sec4_17}
|\mathcal{F}(q_1-q_2)(\xi)|\le  C\big( e^{-\alpha_3 a}  + e^{ \alpha_4 a} \delta +\frac{1}{a}\big) \le C\big( e^{ \alpha_4 a} \delta +\frac{1}{a}\big),
\end{equation}
for all $k\ge  1$,  $a\ge \frac{1}{h_0\gamma} k$,   $\xi\in \R^n$ such that $|\xi|\le 2\sqrt{k^2+a^2}$. 
Taking $\rho\le 2\sqrt{k^2+a^2}$ to be chosen and using \eqref{eq_sec4_17}, together with Parseval's formula, we get 
\begin{equation}
\label{eq_sec4_18}
\begin{aligned}
\|q_1-q_2\|_{H^{-1}(\Omega)}^2 \le \bigg(\int_{|\xi|\le \rho} +\int_{|\xi|\ge \rho}\bigg)\frac{|\mathcal{F}(q_1-q_2)(\xi)|^2}{1+|\xi|^2}d\xi \\
\le C\rho^n \bigg(   e^{2 \alpha_4 a}\delta^2 + \frac{1}{a^2} \bigg)+\frac{C}{\rho^2}.
\end{aligned}
\end{equation}
Setting $\rho=a^{\frac{2}{n+2}}$, \eqref{eq_sec4_18} gives that 
\begin{equation}
\label{eq_sec4_19}
\|q_1-q_2\|_{H^{-1}(\Omega)}^2\le C  \bigg(  a^{\frac{2n}{n+2}} e^{2 \alpha_4 a} \delta^2  + a^{-\frac{4}{n+2}}\bigg) \le C  \bigg(   e^{4 \alpha_4 a} \delta^2  + a^{-\frac{4}{n+2}}\bigg) ,
\end{equation}
for all $k\ge  1$,  and all $a\ge \frac{1}{h_0\gamma} k$. 
Using that $0<\delta<1/e$, and  choosing 
\[
a= \frac{1}{h_0\gamma} k+\frac{\log \frac{1}{\delta}}{4\alpha_4},
\]   
we conclude from \eqref{eq_sec4_19} that for all $k\ge  1$, 
\[
\|q_1-q_2\|^2_{H^{-1}(\Omega)}\le  e^{Ck}\delta+\frac{C}{(k+\log\frac{1}{\delta})^{\frac{4}{n+2}}}.
\]
This completes the proof of Theorem \ref{thm_main}. 

\textbf{Proof of Corollary \ref{cor_main_int}.} We follow the classical argument due to 
Alessandrini \cite{Aless_1}, see also \cite{Caro_Marinov}.   Let $\varepsilon>0$ be such that $s=\frac{n}{2}+2\varepsilon$. Then by the Sobolev embedding, interpolation and the a priori bounds for $q_j$, we get 
\begin{align*}
\|q_1-q_2\|_{L^\infty(\Omega)}&\le C\|q_1-q_2\|_{H^{\frac{n}{2}+\varepsilon}(\Omega)}\le C \|q_1-q_2\|_{H^{-1}(\Omega)}^{\frac{\varepsilon}{1+s}}\|q_1-q_2\|_{H^s(\Omega)}^{\frac{1-\varepsilon+s}{s+1}}\\
&\le C (2M)^{\frac{1-\varepsilon+s}{s+1}} \|q_1-q_2\|_{H^{-1}(\Omega)}^{\frac{\varepsilon}{1+s}}.
\end{align*}
Corollary \ref{cor_main_int} follows from this bound combined with Theorem \ref{thm_main}.

\begin{appendix}

\section{The interior impedance problem}
\label{sec_direct_problem} 

In this section we shall collect some well known results about the solvability of the interior impedance problem and some bounds on its solution needed in this paper, see \cite{BaSpWu_2016},  \cite{Ammari_Dos_Santos_2013}, and \cite{Melenk}. 

\begin{prop}
\label{prop_solvability_direct}
Let $\Omega\subset \R^n$, $n\ge 3$, be a bounded connected open set with $C^\infty$ boundary,  let $k>0$, and $q\in L^\infty(\Omega;\R)$. Then for any $F\in L^2(\Omega)$, $f\in H^{-\frac{1}{2}}(\p \Omega)$, the interior impedance problem, 
\begin{equation}
\label{eq_sec2_1}
\begin{aligned}
&(-\Delta-k^2+q)u=F\quad \text{in}\quad \Omega,\\
&(\p_\nu-i k)u=f\quad \text{on}\quad \p \Omega,
\end{aligned}
\end{equation}
has a unique solution $u\in H^1(\Omega)$.  Furthermore, there exists $C=C(k)>0$ such that 
\begin{equation}
\label{eq_sec2_1_cont_bound}
\|u\|_{H^1(\Omega)}\le C(\|F\|_{L^2(\Omega)}+\|f\|_{H^{-\frac{1}{2}}(\p \Omega)}).
\end{equation}
\end{prop}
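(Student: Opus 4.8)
The plan is to treat the interior impedance problem via the Fredholm alternative, exploiting the fact that the impedance (Robin) boundary condition $(\p_\nu - ik)u = f$ destroys self-adjointness and thereby rules out a nontrivial kernel. First I would set up the weak formulation: a function $u \in H^1(\Omega)$ solves \eqref{eq_sec2_1} if and only if
\[
\int_\Omega \nabla u \cdot \overline{\nabla w}\, dx - k^2 \int_\Omega u\overline{w}\,dx + \int_\Omega q u \overline{w}\,dx - ik\int_{\p\Omega} u\overline{w}\,dS = -\int_\Omega F\overline{w}\,dx - \langle f, w\rangle_{\p\Omega}
\]
for all $w \in H^1(\Omega)$; here the boundary term comes from integrating by parts and using the inner-normal convention together with the boundary condition, and $\langle\cdot,\cdot\rangle_{\p\Omega}$ is the $H^{-1/2}$--$H^{1/2}$ duality, which makes sense by the trace theorem. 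The sesquilinear form $a(u,w)$ on the left is bounded on $H^1(\Omega)\times H^1(\Omega)$ (using $q\in L^\infty$ and the continuity of the trace $H^1(\Omega)\to L^2(\p\Omega)$), and the antilinear functional on the right is bounded on $H^1(\Omega)$.

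Next I would establish a Gårding-type inequality: writing $a(u,u) = \|\nabla u\|_{L^2}^2 - k^2\|u\|_{L^2}^2 + \int q|u|^2 - ik\|u\|_{L^2(\p\Omega)}^2$, the form $a(u,w) + (k^2 + \|q\|_{L^\infty} + 1)\int_\Omega u\overline w\,dx$ has real part bounded below by $\|u\|_{H^1(\Omega)}^2$, so by Lax--Milgram the shifted problem is uniquely solvable and the solution operator is bounded; since the shift is a compact perturbation on $H^1(\Omega)$ (the embedding $H^1(\Omega)\hookrightarrow L^2(\Omega)$ is compact by Rellich), the Fredholm alternative applies to \eqref{eq_sec2_1}. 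It then remains to prove injectivity. The main obstacle — and really the only substantive point — is this uniqueness step. Suppose $u\in H^1(\Omega)$ solves the homogeneous problem ($F=0$, $f=0$). Taking $w = u$ in the weak formulation and examining the imaginary part kills the real quantities $\|\nabla u\|_{L^2}^2$, $\|u\|_{L^2}^2$, $\int q|u|^2$ (all real since $q$ is real-valued), leaving $-k\|u\|_{L^2(\p\Omega)}^2 = 0$, hence $u|_{\p\Omega} = 0$. Then the boundary condition $(\p_\nu - ik)u = 0$ forces $\p_\nu u|_{\p\Omega} = 0$ as well, so $u$ has vanishing Cauchy data on $\p\Omega$; since $(-\Delta - k^2 + q)u = 0$ and we may extend $u$ by zero across $\p\Omega$ to get a solution on a neighborhood of $\overline\Omega$, weak unique continuation for the Schrödinger operator with $L^\infty$ potential (Jerison--Kenig) yields $u\equiv 0$ in $\Omega$. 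Injectivity plus Fredholm gives existence and uniqueness for all data.

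Finally, the a priori bound \eqref{eq_sec2_1_cont_bound}: once the solution operator is known to exist and be bounded (as the inverse of a bounded bijective Fredholm operator of index zero between $H^1(\Omega)$ and its dual, restricted to the data space), the estimate $\|u\|_{H^1(\Omega)} \le C(\|F\|_{L^2(\Omega)} + \|f\|_{H^{-1/2}(\p\Omega)})$ is immediate from the open mapping theorem, with $C = C(k)$ since the norm of the form and the Gårding constant depend on $k$. I would remark that the $k$-dependence here is not claimed to be uniform — that refinement is exactly what Theorem \ref{thm_Baskin} (Baskin--Spence--Wunsch) supplies and is recorded separately. This completes the plan; the routine verifications (boundedness of the form, the Rellich compactness, the shift computation) I would carry out tersely, and cite \cite{BaSpWu_2016}, \cite{Ammari_Dos_Santos_2013}, \cite{Melenk} for the standard details.
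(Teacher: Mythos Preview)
Your proposal is correct and mirrors the paper's argument almost step for step: weak formulation, G{\aa}rding inequality, Lax--Milgram on a shifted form, Rellich compactness to obtain Fredholm index zero, uniqueness by taking the imaginary part of $a(u,u)$ to force vanishing Cauchy data and then invoking unique continuation, and finally the bound from the open mapping theorem. The only (harmless) discrepancy is a sign convention in the boundary term---with the inner normal the paper's form carries $+ik\int_{\partial\Omega}u\overline{v}\,dS$ and the right-hand side is $\int_\Omega F\overline{v}\,dx-\int_{\partial\Omega} f\overline{v}\,dS$---which does not affect any of the conclusions.
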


\begin{proof}
Associated to \eqref{eq_sec2_1}, we introduce the following sesquilinear form
\begin{align*}
&a: H^1(\Omega)\times H^1(\Omega)\to \C,\\
&a(u,v)=\int_{\Omega} \nabla u\cdot \nabla \overline{v}dx +\int_{\Omega} (q-k^2) u\overline{v}dx +ik \int_{\p \Omega} u\overline{v}dS. 
\end{align*}
Now $u\in H^1(\Omega)$ is a solution to \eqref{eq_sec2_1} if and only if 
\[
a(u,v)=\int_{\Omega} F\overline{v}dx -\int_{\p \Omega} f\overline{v}dS,
\]
for all $v\in H^1(\Omega)$.  The form $a$ is bounded on $H^1(\Omega)\times H^1(\Omega)$, i.e. there is $C=C(k)>0$ such that
\[
|a(u,v)|\le C\|u\|_{H^1(\Omega)}\|v\|_{H^1(M)}, \quad u,v\in H^1(\Omega).
\]
and $a$ is coercive on $H^1(\Omega)$, i.e. 
\begin{equation}
\label{eq_sec2_1_-100}
\Re a(u,u)\ge \|u\|_{H^1(\Omega)}^2-C\|u\|_{L^2(\Omega)}^2, \quad u\in H^{1}(\Omega). 
\end{equation}

Let $\mathcal{A}: H^1(\Omega)\to (H^1(\Omega))^*$ be the bounded linear operator defined by the form $a$,
\[
a(u,v)=\langle \mathcal{A}u,\overline{v}\rangle_{(H^1(\Omega))^*, H^1(\Omega)}, \quad u,v\in H^1(\Omega). 
\]
Here $(H^1(\Omega))^*$ is the dual space to $H^1(\Omega)$. By Lax-Milgram's lemma and \eqref{eq_sec2_1_-100}, the operator $\mathcal{A}+C:H^1(\Omega)\to (H^1(\Omega))^*$ is an isomorphism provided that $C>0$ is sufficiently large, and since the imbedding  $H^1(\Omega) \hookrightarrow L^2(\Omega)$ is compact, we conclude that the operator $\mathcal{A}:H^1(\Omega)\to (H^1(\Omega))^*$ is Fredholm of index zero. 

Now let $u\in H^1(\Omega)$ be such that $\mathcal{A}u=0$. Then $u$ satisfies the impedance problem \eqref{eq_sec2_1} with $F=0$, $f=0$, and $\Im a(u,u)=0$. As $q$ is real valued, this implies that  $u=0$ and $\p_\nu u=0$ on $\p \Omega$. As $\Omega$ is connected,  by unique continuation for the equation $(-\Delta-k^2+q)u=0$ in $\Omega$, we get $u=0$ in $\Omega$. Thus, $\mathcal{A}$ is injective, and therefore, $\mathcal{A}:H^1(\Omega)\to (H^1(\Omega))^*$ is an isomorphism. 

By the trace theorem, for any $F\in L^2(\Omega)$ and $f\in H^{-\frac{1}{2}}(\p \Omega)$, the antilinear functional 
\[
H^1(\Omega)\ni v\mapsto \int_\Omega F\overline{v}dx-\int_{\p \Omega}f\overline{v}dS
\]
is continuous.  Hence, for any $F\in L^2(\Omega)$ and $f\in H^{-\frac{1}{2}}(\p \Omega)$, the problem \eqref{eq_sec2_1} has a unique solution $u\in H^1(\Omega)$ and \eqref{eq_sec2_1_cont_bound} holds. 
\end{proof}

We shall need the following mapping property of the Robin--to--Dirichlet map, introduced in \eqref{eq_int_Robin_to_Dir}. 
\begin{prop}
 \label{prop_regularity_RtD} 
 The operator $\Lambda_q(k): L^2(\p \Omega)\to H^{1}(\p \Omega)$ is bounded. 
\end{prop}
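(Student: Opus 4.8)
The plan is to bootstrap the $H^1(\Omega)$ solution furnished by Proposition \ref{prop_solvability_direct} up to $H^{\frac{3}{2}}(\Omega)$ by elliptic regularity, and then conclude via the trace theorem. Given $f\in L^2(\p\Omega)\subset H^{-\frac{1}{2}}(\p\Omega)$, first let $u\in H^1(\Omega)$ be the unique solution of \eqref{eq_int_1} given by Proposition \ref{prop_solvability_direct}, so that $\|u\|_{H^1(\Omega)}\le C\|f\|_{H^{-\frac{1}{2}}(\p\Omega)}\le C\|f\|_{L^2(\p\Omega)}$.

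Next I would read the interior equation as $-\Delta u=(k^2-q)u=:F$ in $\Omega$: since $q\in L^\infty(\Omega)$ and $u\in H^1(\Omega)\subset L^2(\Omega)$, one has $F\in L^2(\Omega)$ with $\|F\|_{L^2(\Omega)}\le C\|u\|_{L^2(\Omega)}$. Because $u\in H^1(\Omega)$ and $\Delta u\in L^2(\Omega)$, the normal derivative $\p_\nu u$ is well defined in $H^{-\frac{1}{2}}(\p\Omega)$ via Green's formula, and the impedance condition reads $\p_\nu u=iku|_{\p\Omega}+f=:g$ on $\p\Omega$. By the trace theorem $u|_{\p\Omega}\in H^{\frac{1}{2}}(\p\Omega)$, hence $g\in L^2(\p\Omega)$ with $\|g\|_{L^2(\p\Omega)}\le C(\|u\|_{H^1(\Omega)}+\|f\|_{L^2(\p\Omega)})\le C\|f\|_{L^2(\p\Omega)}$.

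The core of the argument is then the shift estimate for the Neumann problem on the smooth bounded domain $\Omega$: from $-\Delta u=F\in L^2(\Omega)\subset H^{-\frac{1}{2}}(\Omega)$ and $\p_\nu u=g\in L^2(\p\Omega)$ one concludes $u\in H^{\frac{3}{2}}(\Omega)$ together with
\[
\|u\|_{H^{\frac{3}{2}}(\Omega)}\le C\big(\|F\|_{H^{-\frac{1}{2}}(\Omega)}+\|g\|_{L^2(\p\Omega)}+\|u\|_{L^2(\Omega)}\big).
\]
Chaining this with the estimates above gives $\|u\|_{H^{\frac{3}{2}}(\Omega)}\le C\|f\|_{L^2(\p\Omega)}$. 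Finally, since the trace map $H^{\frac{3}{2}}(\Omega)\to H^1(\p\Omega)$ is bounded, $\Lambda_q(k)f=u|_{\p\Omega}$ lies in $H^1(\p\Omega)$ with $\|\Lambda_q(k)f\|_{H^1(\p\Omega)}\le C\|u\|_{H^{\frac{3}{2}}(\Omega)}\le C\|f\|_{L^2(\p\Omega)}$, which is exactly the asserted boundedness.

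I expect the only delicate point to be the half-integer shift estimate used above, i.e. the $H^{\frac{3}{2}}$ boundary regularity for the Laplacian when the Neumann datum lies merely in $L^2(\p\Omega)$ rather than in $H^{\frac{1}{2}}(\p\Omega)$. This is a standard (if technical) fact, provable by localizing near $\p\Omega$, flattening the boundary, and reducing to the half-space, where one can use the explicit Neumann kernel or a pseudodifferential reduction to the Dirichlet--to--Neumann operator; alternatively it can simply be quoted from the classical treatments of elliptic boundary value problems (Lions--Magenes, Grisvard, McLean). Note that one should not expect a gain beyond $H^{\frac{3}{2}}(\Omega)$ here: the impedance datum $f$, and therefore $g$, is only in $L^2(\p\Omega)$, which is precisely why the target regularity for $\Lambda_q(k)$ is $H^1(\p\Omega)$ and no higher.
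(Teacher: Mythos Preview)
Your argument is correct, but it takes a different route from the paper. You argue directly at the target regularity: from $f\in L^2(\p\Omega)$ you get $\p_\nu u\in L^2(\p\Omega)$ and then invoke the fractional Neumann shift estimate to land in $H^{3/2}(\Omega)$, taking the trace to $H^1(\p\Omega)$. The paper instead avoids the half-integer regularity step by interpolation: it first records $\Lambda_q(k):H^{-1/2}(\p\Omega)\to H^{1/2}(\p\Omega)$ from Proposition~\ref{prop_solvability_direct}, then shows $\Lambda_q(k):H^{1/2}(\p\Omega)\to H^{3/2}(\p\Omega)$ using only the classical integer-order estimate $\|u\|_{H^2(\Omega)}\le C(\|\Delta u\|_{L^2}+\|u\|_{H^1}+\|\p_\nu u\|_{H^{1/2}(\p\Omega)})$ (available since $f\in H^{1/2}$ forces $\p_\nu u\in H^{1/2}$), and finally interpolates between these two endpoints to get $L^2\to H^1$. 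Your approach is more direct and transparent about why $H^1(\p\Omega)$ is the sharp target, at the cost of quoting the less commonly stated $H^{3/2}$ Neumann regularity with $L^2$ boundary data; the paper's approach trades that for an interpolation argument but needs only the textbook $H^2$ estimate cited from Grisvard.
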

\begin{proof}
First it follows from \eqref{eq_sec2_1_cont_bound} that $\Lambda_q(k): H^{-\frac{1}{2}}(\p \Omega)\to H^{\frac{1}{2}}(\p \Omega)$ is bounded. The claim will follow by interpolation, if we show that $\Lambda_q(k): H^{\frac{1}{2}}(\p \Omega)\to H^{\frac{3}{2}}(\p \Omega)$ is bounded.  To see the latter, let $u\in H^1(\Omega)$ be the solution to \eqref{eq_int_1} with $f\in  H^{\frac{1}{2}}(\p \Omega)$. Then we have $\Delta u\in L^2(\Omega)$ and $\p_\nu u\in H^{\frac{1}{2}}(\p \Omega)$. By the a priori estimate 
\begin{equation}
\label{eq_sec2_1_a_priori}
\|u\|_{H^2(\Omega)}\le C(\|\Delta u\|_{L^2(\Omega)}+\|u\|_{H^1(\Omega)}+\|\p_\nu u\|_{H^{\frac{1}{2}}(\p \Omega)}),
\end{equation}
see \cite[Theorem 2.3.3.2, p. 106]{Grisvard_book} and  \cite[p. 253]{BaSpWu_2016}, we conclude that $u\in H^2(\Omega)$. Furthermore, combining the estimates \eqref{eq_sec2_1_a_priori} and \eqref{eq_sec2_1_cont_bound}, we see that $\Lambda_q(k): H^{\frac{1}{2}}(\p \Omega)\to H^{\frac{3}{2}}(\p \Omega)$ is bounded.  The result follows. 
\end{proof}

The following result will be needed when establishing Theorem  \ref{thm_main} for bounded frequencies. 
\begin{prop}
\label{prop_uniform_bounded_k}
Let $K\subset (0,\infty)$ be compact. There exists $C>0$ such that for all $k\in K$ and all $F\in L^2(\Omega)$, we have 
\[
\|u\|_{H^1(\Omega)}\le C \|F\|_{L^2(\Omega)}.
\]
Here $u\in H^1(\Omega)$ is the unique solution of 
\begin{equation}
\label{eq_sec2_1_with_f=0}
\begin{aligned}
&(-\Delta-k^2+q)u=F\quad \text{in}\quad \Omega,\\
&(\p_\nu-i k)u=0\quad \text{on}\quad \p \Omega. 
\end{aligned}
\end{equation}
\end{prop}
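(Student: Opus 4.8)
The plan is to establish the uniform estimate by contradiction, combining the Fredholm solvability from Proposition \ref{prop_solvability_direct} with a compactness argument in the parameter $k$. Suppose the claim fails. Then there exist sequences $k_j \in K$, $F_j \in L^2(\Omega)$, and solutions $u_j \in H^1(\Omega)$ of \eqref{eq_sec2_1_with_f=0} with $k = k_j$, $F = F_j$, such that $\|u_j\|_{H^1(\Omega)} = 1$ while $\|F_j\|_{L^2(\Omega)} \to 0$. Since $K$ is compact, after passing to a subsequence we may assume $k_j \to k_* \in K$. The bounded sequence $(u_j)$ in $H^1(\Omega)$ has a weakly convergent subsequence $u_j \rightharpoonup u_*$ in $H^1(\Omega)$, and by the compact embedding $H^1(\Omega) \hookrightarrow L^2(\Omega)$ and the compactness of the trace $H^1(\Omega) \to L^2(\p\Omega)$, the convergence is strong in $L^2(\Omega)$ and in $L^2(\p\Omega)$.

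Next I would pass to the limit in the weak formulation. For each $j$, the identity
\[
\int_\Omega \nabla u_j \cdot \nabla \overline{v}\, dx + \int_\Omega (q - k_j^2) u_j \overline{v}\, dx + i k_j \int_{\p\Omega} u_j \overline{v}\, dS = \int_\Omega F_j \overline{v}\, dx
\]
holds for all $v \in H^1(\Omega)$. The first term passes to the limit by weak convergence of $\nabla u_j$; the second and third by the strong $L^2(\Omega)$ and $L^2(\p\Omega)$ convergence together with $k_j \to k_*$; the right-hand side tends to $0$. Hence $u_*$ solves \eqref{eq_sec2_1_with_f=0} with $k = k_*$ and $F = 0$. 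By the uniqueness part of Proposition \ref{prop_solvability_direct} (the operator $\mathcal{A}$ there is an isomorphism, the injectivity being exactly the unique continuation argument already given), we conclude $u_* = 0$.

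Finally I must reach a contradiction with $\|u_j\|_{H^1(\Omega)} = 1$; this is the step requiring the most care, since weak convergence alone does not force the $H^1$ norm to be continuous. The resolution is to use the coercivity estimate \eqref{eq_sec2_1_-100}: applying it to $u_j$ gives
\[
\|u_j\|_{H^1(\Omega)}^2 \le \Re a(u_j, u_j) + C \|u_j\|_{L^2(\Omega)}^2 = \Re\!\Big(\int_\Omega F_j \overline{u_j}\, dx\Big) + C \|u_j\|_{L^2(\Omega)}^2.
\]
The right-hand side is bounded by $\|F_j\|_{L^2(\Omega)} \|u_j\|_{L^2(\Omega)} + C \|u_j\|_{L^2(\Omega)}^2$, which tends to $0$ because $\|F_j\|_{L^2(\Omega)} \to 0$ and $\|u_j\|_{L^2(\Omega)} \to \|u_*\|_{L^2(\Omega)} = 0$ by strong $L^2$ convergence. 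Thus $\|u_j\|_{H^1(\Omega)} \to 0$, contradicting $\|u_j\|_{H^1(\Omega)} = 1$. This proves the uniform bound. The main obstacle is precisely ensuring that the normalization survives the limit, and the coercivity inequality is the tool that converts the qualitative vanishing of $u_*$ into a quantitative contradiction.
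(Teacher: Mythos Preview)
Your argument is correct and follows the same contradiction--normalization scheme as the paper, but the endgame is different. The paper invokes the $H^2$ a~priori estimate \eqref{eq_sec2_1_a_priori} to obtain a uniform $H^2$ bound on the sequence $u_n$, and then uses the compact embedding $H^2(\Omega)\hookrightarrow H^1(\Omega)$ to extract a subsequence converging \emph{strongly} in $H^1(\Omega)$; the limit $u_0$ then automatically satisfies $\|u_0\|_{H^1(\Omega)}=1$, which contradicts $u_0=0$ immediately. You instead stay at the $H^1$ level, extract only weak $H^1$ convergence (plus strong $L^2$ and $L^2(\p\Omega)$ convergence), and then use the coercivity inequality \eqref{eq_sec2_1_-100} to convert $u_*=0$ into $\|u_j\|_{H^1(\Omega)}\to 0$. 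One small remark: the constant $C$ in \eqref{eq_sec2_1_-100} depends on $k$ through $k^2$, so you should note that it can be taken uniformly over $k\in K$ by compactness; this is implicit in your write-up but worth making explicit. Your route avoids appealing to $H^2$ regularity and would go through under weaker boundary smoothness, while the paper's route is shorter once that regularity is in hand.
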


\begin{proof}
Assuming the contrary, we get sequences $k_n\in K$, $F_n\in L^2(\Omega)$ such that if $u_n\in H^1(\Omega)$ is the corresponding solution of \eqref{eq_sec2_1_with_f=0} then 
$\|u_n\|_{H^1(\Omega)}>n\|F_n\|_{L^2(\Omega)}$ for all $n=1,2,\dots$.  Assuming as we may that $\|u_n\|_{H^1(\Omega)}=1$, we get $\|F_n\|_{L^2(\Omega)}\to 0$ as $n\to \infty$ and  \eqref{eq_sec2_1_a_priori} implies that the sequence $u_n$ is bounded in $H^2(\Omega)$.  Using Rellich's compactness theorem,  we may assume, passing to subsequences, that $k_n\to k_0\in K$ and $u_n\to u_0$ in $H^1(\Omega)$, $\|u_0\|_{H^1(\Omega)}=1$. Using the weak formulation of the boundary problem \eqref{eq_sec2_1_with_f=0}, we obtain that 
\[
\int_{\Omega} \nabla u_n\cdot \nabla \overline{v} dx+\int_{\Omega}(q-k_n^2)u_n\overline{v}dx+ik_n\int_{\p \Omega} u_n\overline{v}dS=\int_{\Omega} F_n \overline{v} dx,  
\]
for all $v\in H^1(\Omega)$. Letting $n\to \infty$, we get $u_0=0$ which contradicts  the fact that $\|u_0\|_{H^1(\Omega)}=1$. 
\end{proof}

The following result, giving sharp bounds on solutions to the interior impedance problem, established recently by Baskin, Spence and Wunsch  \cite{BaSpWu_2016}, will be crucial for us when proving Theorem \ref{thm_main}. 
\begin{thm}
\label{thm_Baskin}
Let $\Omega\subset \R^n$, $n\ge 3$, be a bounded open set with $C^\infty$ boundary. Given  $F\in L^2(\Omega)$, $f\in L^2(\p \Omega)$, let $u\in H^1(\Omega)$ be the solution to the interior impedance problem, 
\begin{align*}
&(-\Delta-k^2)u=F\quad \text{in}\quad \Omega,\\
&(\p_\nu-i k)u=f\quad \text{on}\quad \p \Omega.
\end{align*}
Then there is $C>0$ such that 
\begin{equation}
\label{eq_Baskin}
\|\nabla u\|_{L^2(\Omega)}+|k| \|u\|_{L^2(\Omega)}\le C(\|F\|_{L^2(\Omega)}+\|f\|_{L^2(\p \Omega)}), 
\end{equation}
for all $k\in \R$. 
\end{thm}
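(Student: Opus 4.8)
The plan is to prove this a priori bound by the classical multiplier (Morawetz/Rellich) method, which is the route taken by Baskin--Spence--Wunsch \cite{BaSpWu_2016}. The underlying idea is to test the equation against cleverly chosen multipliers and integrate by parts, using the impedance boundary condition to control the boundary terms, and exploiting the geometry of $\Omega$ (through its boundedness, i.e., a vector field pointing outward) to gain a sign on the remaining bulk terms. Since the excerpt only requires that such a bound exist with some constant $C$ depending on $\Omega$ but uniform in $k\in\R$, I would not need the sharpest possible constants.

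First I would dispose of the low-frequency regime: for $|k|$ bounded, the estimate \eqref{eq_Baskin} follows from Proposition \ref{prop_uniform_bounded_k} (applied on compact frequency sets, after reducing the inhomogeneous boundary data $f$ to a right-hand side in $L^2(\Omega)$ via a bounded extension $Ef\in H^1(\Omega)$ with $(\p_\nu - ik)Ef$ having controlled $L^2(\p\Omega)$ norm). So the content is the high-frequency estimate, $|k|\ge k_0$. Here I would multiply the equation $(-\Delta - k^2)u = F$ by $\overline{u}$ and integrate by parts over $\Omega$; taking the imaginary part and using $(\p_\nu - ik)u = f$ on $\p\Omega$ yields control of $|k|\,\|u\|_{L^2(\p\Omega)}^2$ (and $\|\p_\nu u\|_{L^2(\p\Omega)}^2$) in terms of $\|F\|_{L^2(\Omega)}$, $\|f\|_{L^2(\p\Omega)}$, and $\|u\|_{L^2(\Omega)}$. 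Taking the real part gives $\|\nabla u\|_{L^2(\Omega)}^2 - k^2\|u\|_{L^2(\Omega)}^2$ in terms of the same quantities, which by itself is not coercive — this is exactly the difficulty.

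To overcome it, I would use the Morawetz multiplier $Mu := x\cdot\nabla u + \tfrac{n-1}{2}u$ (or $(x-x_0)\cdot\nabla u + \alpha u$ with $\alpha$ suitably chosen), test the equation against $\overline{Mu}$, take real parts, and integrate by parts. The virial identity produces, in the interior, the terms $\tfrac12|\nabla u|^2$ and $k^2\,\tfrac{n}{2}|u|^2$ minus $\tfrac{n}{2}k^2|u|^2$, i.e., a clean positive multiple of $\|\nabla u\|_{L^2(\Omega)}^2$ with no residual bulk $k^2$-term (when $n\ge 3$ the coefficient works out favorably), while the boundary contributions involve $x\cdot\nu$, $\p_\nu u$, $\nabla_\tau u$, and $u$ on $\p\Omega$. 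These boundary terms I would bound using the already-obtained control on $\|\p_\nu u\|_{L^2(\p\Omega)}$ and $|k|^{1/2}\|u\|_{L^2(\p\Omega)}$, absorbing the tangential-derivative term either by choosing $x_0$ so that $\Omega$ is star-shaped about $x_0$ (making $x\cdot\nu\ge 0$, which kills the bad sign) or, for general $\Omega$, by the standard trick of combining with an additional $\varepsilon|k|^{-1}$-weighted multiplier and a boundary term estimate, at the cost of a larger constant. Combining the real part of the $\overline{u}$-multiplier identity (to handle $k^2\|u\|_{L^2}^2$) with the Morawetz identity, and then using Cauchy--Schwarz and absorption, yields $\|\nabla u\|_{L^2(\Omega)}^2 + k^2\|u\|_{L^2(\Omega)}^2 \lesssim \|F\|_{L^2(\Omega)}^2 + \|f\|_{L^2(\p\Omega)}^2$, which is \eqref{eq_Baskin}. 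The main obstacle is the boundary term handling for non-star-shaped $\Omega$: there one must either invoke the refined estimates of \cite{BaSpWu_2016} (which use a Melrose--Taylor type argument or a clever combination of multipliers together with the nontrapping geometry) or restrict attention to the class of domains for which the elementary argument closes; since the excerpt permits citing \cite{BaSpWu_2016}, the cleanest route is simply to quote their Theorem directly, which is what I would do.
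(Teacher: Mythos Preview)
The paper does not prove Theorem~\ref{thm_Baskin} at all: it is stated in Appendix~\ref{sec_direct_problem} as a result ``established recently by Baskin, Spence and Wunsch \cite{BaSpWu_2016}'' and simply cited without proof. Your final sentence---that the cleanest route is to quote their theorem directly---is therefore exactly what the paper does, and is all that is required here.

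Your sketch of the multiplier argument is a correct outline of how \cite{BaSpWu_2016} proceeds (test against $\overline{u}$ to control boundary traces, then use a Morawetz-type multiplier $x\cdot\nabla u + \alpha u$ to get coercivity in the bulk), and your identification of the main obstacle---handling the boundary terms for non-star-shaped $\Omega$---is accurate. But none of this appears in the present paper, so your proposal goes well beyond what is needed: the paper treats this as a black-box citation, and you should too.
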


\section{Proof of Theorem \ref{thm_Fursikov_Imanuvilov} }

\label{app_carleman}

We shall proceed by following the arguments of Fursikov and Imanuvilov \cite{Fursikov_Imanuvilov_1996} as presented in  \cite[Theorem 4.3.9]{Le_Rousseau_2012}.  By density, it is suffices to prove \eqref{eq_thm_Furs_Iman} for  $u\in C^\infty (\overline{\Omega})$. We write 
\[
P_\varphi(h,E)=A_2+iA_1,
\]
where 
\[
A_2=(hD)^2-|\varphi'|^2-E,\quad A_1=\varphi'\circ hD+hD\circ \varphi'=2\varphi'\cdot hD-i h\Delta \varphi.
\]
Here  $D=\frac{1}{i}\p$. The idea of Fursikov and Imanuvilov \cite{Fursikov_Imanuvilov_1996} is the following: rather than considering the equation $P_\varphi(h,E)u=g$, one works with 
\[
(A_2+i\underline{A_1})u=g+h\mu \Delta \varphi u,
\] 
where $\mu>0$ is to be chosen and   
\[
i\underline{A_1}=2\varphi'\cdot h\nabla + h(\mu+1)\Delta \varphi.
\]
Following  \cite{Fursikov_Imanuvilov_1996},  \cite[Theorem 4.3.9]{Le_Rousseau_2012},  we get
\begin{equation}
\label{eq_106_0}
\begin{aligned}
\| g+h\mu \Delta \varphi u\|_{L^2(\Omega)}^2&=\|A_2u\|_{L^2(\Omega)}^2+\|\underline{A_1} u\|^2_{L^2(\Omega)}+2\Re(A_2u, i\underline{A_1} u)_{L^2(\Omega)} \\
&\ge 2\Re(A_2u, i\underline{A_1} u)_{L^2(\Omega)}.
\end{aligned}
\end{equation}

We shall next compute 
\begin{equation}
\label{eq_106_1}
\Re(A_2u, i\underline{A_1} u)_{L^2(\Omega)}=\Re \int_{\Omega} ( (hD)^2u-|\varphi'|^2u-Eu ) (2\varphi'\cdot h\nabla \overline{u} + h(\mu+1)\Delta \varphi \overline{u})dx. 
\end{equation}
In doing so,  as in  \cite[Theorem 4.3.9]{Le_Rousseau_2012}, we write the integral in \eqref{eq_106_1} as a sum of six  terms
$I_{jk}$, $1\le j \le 3$, $1\le k\le 2$,  where $I_{jk}$ is the $L^2$ scalar product of the $j$th term in the expression of $A_2 u$ and the $k$th term  in the expression of $i\underline{A_1} u$.

For the term $I_{11}$ in \eqref{eq_106_1}, performing two integration by parts, as in \cite[Theorem 4.3.9]{Le_Rousseau_2012}, we get
\begin{align*}
I_{11}&=\Re \int_{\Omega} (-h^2 \Delta u) (2\varphi'\cdot h\nabla \overline{u}) dx=2h^3  \int_{\Omega} \varphi''\nabla u \cdot\nabla \overline{u}dx \\
&-h^3 \int_{\Omega}\Delta \varphi |\nabla u|^2dx 
-h^3\int_{\p \Omega} \p_\nu \varphi |\nabla u|^2dS
+2h^3\Re \int_{\p \Omega} (\p_\nu u)\varphi'\cdot \nabla \overline{u}dS.
\end{align*}
For the  term $I_{12}$ in \eqref{eq_106_1}, performing an integration by parts, as in \cite[Theorem 4.3.9]{Le_Rousseau_2012}, we obtain that 
\begin{align*}
I_{12}&=\Re \int_{\Omega} (-h^2 \Delta u) h(\mu+1)(\Delta \varphi) \overline{u}  dx= h^3(\mu+1)\int_{\Omega}\Delta \varphi |\nabla u|^2dx\\
 &+h^3(\mu+1)\Re \int_{\Omega} (\nabla u\cdot \nabla \Delta\varphi)\overline{u}dx+h^3(\mu+1)\Re\int_{\p \Omega} (\p_\nu u)(\Delta\varphi)\overline{u}dS.
\end{align*}
For the  term $I_{21}$ in \eqref{eq_106_1}, proceeding as in  \cite[Theorem 4.3.9]{Le_Rousseau_2012}, and performing an integration by parts,
we get 
\begin{align*}
I_{21}=-2\Re \int_{\Omega} |\varphi'|^2 u   \varphi'\cdot h\nabla \overline{u} dx= h\int_{\Omega} \nabla \cdot (|\varphi'|^2\varphi')|u|^2dx+h\int_{\p \Omega}\p_\nu \varphi |\varphi'|^2 |u|^2dS.
\end{align*}
For the term $I_{22}$ in \eqref{eq_106_1}, we have
\begin{align*}
I_{22}=-\Re \int_{\Omega} |\varphi'|^2 u   h(\mu+1)(\Delta \varphi) \overline{u}dx=-h(\mu+1)\int_{\Omega} (\Delta \varphi)|\varphi'|^2 |u|^2 dx.
\end{align*}
Finally, using that $\varphi'\cdot \nabla |u|^2=2\Re (u\varphi'\cdot \nabla \overline{u})$, and integrating by parts, we get 
\begin{align*}
I_{31}+I_{32}&=-\Re \int_{\Omega} Eu \big(  2\varphi'\cdot h\nabla \overline{u} + h(\mu+1)\Delta \varphi \overline{u}\big) dx=
-hE \int_{\Omega} \varphi'\cdot \nabla |u|^2dx\\
&-hE(\mu+1)\int_{\Omega} \Delta \varphi |u|^2dx= -hE \mu \int_{\Omega} \Delta \varphi |u|^2dx
+hE \int_{\p \Omega} (\p_\nu \varphi) |u|^2dS.
\end{align*}
Collecting all the terms together, we obtain that 
\begin{equation}
\label{eq_106_2_-1}
\begin{aligned}
\Re(A_2u, i\underline{A_1} u)_{L^2(\Omega)}=h\int_{\Omega}\tilde \alpha_0 |u|^2dx+h^3 \int_{\Omega} \alpha_1 |\nabla u|^2 dx +X + bt,
\end{aligned}
\end{equation}
where 
\begin{equation}
\label{eq_106_-2}
\begin{aligned}
&\tilde \alpha_0=\alpha_0-E\mu\Delta\varphi, \quad \alpha_0=\nabla \cdot (|\varphi'|^2\varphi') -(\mu+1)(\Delta \varphi)|\varphi'|^2, \quad \alpha_1=\mu\Delta\varphi,\\
&X=2h^3 \int_{\Omega} \varphi''\nabla u \cdot \nabla \overline{u}dx+ h^3(\mu+1)\Re \int_{\Omega}(\nabla \Delta \varphi \cdot \nabla u )\overline{u}dx,\\
&bt= -h^3\int_{\p \Omega} \p_\nu \varphi |\nabla u|^2dS
+2h^3\Re \int_{\p \Omega} (\p_\nu u)\varphi'\cdot \nabla \overline{u}dS\\
&+h^3(\mu+1)\Re\int_{\p \Omega} (\p_\nu u)(\Delta\varphi)\overline{u}dS+  h\int_{\p \Omega}\p_\nu \varphi |\varphi'|^2 |u|^2dS+
hE \int_{\p \Omega} (\p_\nu \varphi) |u|^2dS.
\end{aligned}
\end{equation}
Now by Lemma 4.3.10 in \cite{Le_Rousseau_2012}, we have
\begin{equation}
\label{eq_106_2}
\alpha_0\ge C\gamma^4\varphi^3,
\end{equation}
provided $\mu<2$. 
Assuming that $\gamma\ge 1$ and using that $\psi\ge 0$ on $\overline{\Omega}$,  we get  for all $0\le E\le 1$, 
\begin{equation}
\label{eq_106_3}
|E\mu\Delta\varphi|\le | \mu (\gamma^2 |\psi'|^2\varphi+\gamma \Delta \psi \varphi)|\le C \mu \gamma^3 \varphi^3. 
\end{equation}
 It follows from \eqref{eq_106_2} and \eqref{eq_106_3} that $\tilde \alpha_0\ge C \gamma^4 \varphi^3$ for $\gamma>1$ sufficiently large.  
As $\mu>0$,  we also have $\alpha_1\ge C\gamma^2\varphi$ for $\gamma$ sufficiently large. 
Hence, fixing $\mu=1$,  we conclude from \eqref{eq_106_2_-1}, by absorbing the remainder term $X$ as explained in  \cite{Le_Rousseau_2012}, that for all $h>0$ small enough,  all $\gamma$ large enough, and $0\le E\le 1$,
\begin{equation}
\label{eq_106_4}
\Re(A_2u, i\underline{A_1} u)_{L^2(\Omega)}\ge Ch \gamma^4 \int_{\Omega} \varphi^3 |u|^2 dx +Ch \gamma^2\int_{\Omega} \varphi |h\nabla u|^2dx -|bt|.
\end{equation}
It follows from \eqref{eq_106_-2} that for all $0\le E\le 1$, 
\begin{equation}
\label{eq_106_5}
|bt|\le C h \bigg (\gamma^3\int_{\p \Omega} \varphi^3 |u|^2dS + \gamma \int_{\p \Omega} \varphi |h\nabla u|^2dS\bigg). 
\end{equation}
Combining  \eqref{eq_106_0},  \eqref{eq_106_4} and \eqref{eq_106_5} and absorbing the term $h^2\mu^2 \|\Delta\varphi u\|^2_{L^2(\Omega)}$ by choosing $h$ small enough independent of $\gamma$, we get \eqref{eq_thm_Furs_Iman}. This completes the proof of Theorem \ref{thm_Fursikov_Imanuvilov}.

\section{Complex geometric optics solutions to Helmholtz equations}

\label{app_CGO}

Let $\Omega\subset \R^n$, $n\ge 2$, be a bounded open set, and let $k\ge 0$.  We recall the following result due to Sylvester--Uhlmann \cite{Syl_Uhl} and 
H\"ahner  \cite{Hahner_1996} concerning the existence of complex geometric optics solutions to  the Helmholtz equation,  see also \cite{FSU_book}.   This result is very useful here since all the constants are independent of the frequency $k$. 
\begin{prop}
\label{prop_cgo_Hahner}
Let $q\in L^\infty(\Omega)$ and $k\ge 0$. Then there are constants $C_0>0$ and $C_1>0$, depending on $\Omega$ and $n$ only, such that  for all $\zeta\in \C^n$,  $\zeta\cdot\zeta=k^2$, and $|\emph{\text{Im}} \zeta|\ge \max \{C_0\|q\|_{L^\infty(\Omega)}, 1\}$, the equation 
\[
(-\Delta-k^2+q)u=0\quad \text{in}\quad \Omega
\]
has a solution 
\[
u(x)=e^{i\zeta\cdot x}(1+r(x)),
\]
where $r\in L^2(\Omega)$ satisfies 
\[
\|r\|_{L^2(\Omega)}\le \frac{C_1}{|\emph{\text{Im}} \zeta|}\|q\|_{L^\infty(\Omega)}.
\]
\end{prop}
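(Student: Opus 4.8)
The plan is to construct the complex geometric optics solution following H\"ahner's approach, via a solvability estimate for the conjugated Laplacian with a weighted Sobolev norm on a cube. First I would reduce to finding $r$ solving the equation $e^{-i\zeta\cdot x}(-\Delta-k^2+q)e^{i\zeta\cdot x}(1+r)=0$ in $\Omega$; since $\zeta\cdot\zeta=k^2$, the operator $e^{-i\zeta\cdot x}(-\Delta-k^2)e^{i\zeta\cdot x}$ equals $-\Delta-2i\zeta\cdot\nabla$, so the equation becomes $(-\Delta-2i\zeta\cdot\nabla)r=-qr-q$ in $\Omega$. I would then extend $q$ by zero outside $\Omega$, enclose $\overline{\Omega}$ in an open cube $Q$ of side length, say, $2\pi$ after translating, and seek $r$ periodic on $Q$.

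The key analytic input is the estimate, due to Sylvester--Uhlmann and sharpened by H\"ahner, that for $\zeta\in\C^n$ with $\zeta\cdot\zeta=k^2$ and $|\im\zeta|\ge 1$, there is a bounded operator $G_\zeta$ on $L^2(Q)$ with $(-\Delta-2i\zeta\cdot\nabla)G_\zeta f=f$ and $\|G_\zeta f\|_{L^2(Q)}\le \frac{C}{|\im\zeta|}\|f\|_{L^2(Q)}$, where $C$ depends only on $n$ and the size of $Q$, hence only on $\Omega$ and $n$. This is proved by expanding in a Fourier series on $Q$: the symbol of the operator is $|\xi|^2-2i\zeta\cdot\xi = |\xi|^2 + 2\im\zeta\cdot\xi - 2i\re\zeta\cdot\xi$ (writing $\zeta=\re\zeta+i\im\zeta$ with $|\re\zeta|^2-|\im\zeta|^2=k^2$ and $\re\zeta\cdot\im\zeta=0$), and one shows the modulus of this symbol is bounded below by $c|\im\zeta|$ on the shifted lattice, after a judicious shift of the Fourier variable by a half-integer vector in the $\im\zeta$ direction to avoid the zero set; this half-integer shift is precisely H\"ahner's trick that removes the logarithmic loss and gives the clean $1/|\im\zeta|$ bound with frequency-independent constant.

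With $G_\zeta$ in hand, I would solve $r = -G_\zeta(qr) - G_\zeta(q)$ by a Neumann series: the map $r\mapsto -G_\zeta(qr)$ has operator norm on $L^2(Q)$ at most $\frac{C_1'\|q\|_{L^\infty(\Omega)}}{|\im\zeta|}$, which is $\le 1/2$ once $|\im\zeta|\ge 2C_1'\|q\|_{L^\infty(\Omega)}=:C_0\|q\|_{L^\infty(\Omega)}$. Hence $r=-(I+G_\zeta q)^{-1}G_\zeta(q)$ exists in $L^2(Q)$, satisfies the equation in $Q$ and a fortiori in $\Omega$, and obeys $\|r\|_{L^2(\Omega)}\le \|r\|_{L^2(Q)}\le 2\|G_\zeta(q)\|_{L^2(Q)}\le \frac{2C}{|\im\zeta|}\|q\|_{L^2(Q)}\le \frac{C_1}{|\im\zeta|}\|q\|_{L^\infty(\Omega)}$, using that $q$ is supported in $\Omega$ and $|Q|$ depends only on $\Omega$. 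Setting $u(x)=e^{i\zeta\cdot x}(1+r(x))$ then finishes the proof, with $C_0$ and $C_1$ depending only on $\Omega$ and $n$ as required.

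The main obstacle is establishing the frequency-uniform resolvent estimate $\|G_\zeta f\|_{L^2}\le C|\im\zeta|^{-1}\|f\|_{L^2}$ with a constant independent of $k$: naively the symbol $|\xi|^2-2i\zeta\cdot\xi$ can be small when $\xi$ is large (of size $\sim|\re\zeta|\sim k$) and nearly aligned so that $|\xi|^2\approx 2\re\zeta\cdot\xi$ while $\im\zeta\cdot\xi\approx 0$, so one must exploit the lattice structure and the half-integer shift to keep the symbol bounded away from zero by a multiple of $|\im\zeta|$ uniformly in $k$; this is exactly the content of H\"ahner's argument \cite{Hahner_1996}, and I would quote it, since the point of citing that work is that all constants come out independent of the frequency.
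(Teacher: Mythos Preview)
Your sketch is correct and follows exactly the standard H\"ahner argument; the paper itself does not prove this proposition at all but merely states it and cites Sylvester--Uhlmann \cite{Syl_Uhl} and H\"ahner \cite{Hahner_1996} (together with \cite{FSU_book}), so there is nothing to compare against beyond noting that what you wrote is precisely the content of those references. One small slip: with the convention $\widehat{\partial_j f}=i\xi_j\hat f$, the Fourier symbol of $-\Delta-2i\zeta\cdot\nabla$ is $|\xi|^2+2\zeta\cdot\xi$ rather than $|\xi|^2-2i\zeta\cdot\xi$, but this does not affect the argument since the crucial lower bound $|p(\xi)|\ge c|\Im\zeta|$ on the half-integer-shifted lattice holds either way.
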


Let us recall the following standard  elliptic regularity result, see \cite[Theorem 7.1]{Zworski_book}
\begin{prop}
\label{prop_regularity_of_CGO}
Let $\Omega\subset\subset\tilde \Omega\subset \R^n$,  $q\in L^\infty(\tilde \Omega)$, and  $k\ge 0$. Let $u\in L^2(\tilde \Omega)$ be a solution to 
\[
(-\Delta-k^2+q)u=0\quad \text{in}\quad \tilde \Omega.
\]
Then $u\in H^2(\Omega)$ and  we have the following bounds,
\[
\|u\|_{H^2(\Omega)}\le C(1+k^2)\|u\|_{L^2(\tilde \Omega)}, \quad \|u\|_{H^1(\Omega)}\le C(1+k)\|u\|_{L^2(\tilde \Omega)}. 
\]
\end{prop}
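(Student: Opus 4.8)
The plan is to derive both bounds from interior elliptic regularity for the flat Laplacian, keeping careful track of the dependence on $k$: the key observation is that $k$ enters only through the right-hand side once the equation is rewritten as $-\Delta u=(k^2-q)u$, while the interior regularity constant for $-\Delta$ is itself independent of $k$. Since $u\in L^2(\tilde \Omega)$ and $q\in L^\infty(\tilde \Omega)$, we have $\Delta u=(q-k^2)u\in L^2(\tilde\Omega)$, with
\[
\|\Delta u\|_{L^2(\tilde\Omega)}\le (k^2+\|q\|_{L^\infty})\|u\|_{L^2(\tilde\Omega)}\le C(1+k^2)\|u\|_{L^2(\tilde\Omega)}.
\]
First I would fix intermediate open sets $\Omega\subset\subset\Omega_1\subset\subset\tilde\Omega$ together with a cutoff $\chi\in C^\infty_0(\tilde\Omega)$ satisfying $0\le \chi\le 1$ and $\chi=1$ on $\overline{\Omega_1}$.

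For the $H^1$ bound I would run a Caccioppoli argument, which is what captures the sharp power $k^1$. Testing the equation against $\chi^2\overline u$ and integrating by parts gives
\[
\int_{\tilde\Omega}\chi^2|\nabla u|^2\,dx=\Re\int_{\tilde\Omega}(k^2-q)\chi^2|u|^2\,dx-2\Re\int_{\tilde\Omega}\chi\,(\nabla\chi\cdot\nabla u)\,\overline u\,dx.
\]
Young's inequality bounds the last term by $\frac{1}{2}\int_{\tilde\Omega}\chi^2|\nabla u|^2\,dx+C\int_{\tilde\Omega}|\nabla\chi|^2|u|^2\,dx$; absorbing the gradient term into the left-hand side yields $\int_{\Omega_1}|\nabla u|^2\,dx\le C(1+k^2)\|u\|_{L^2(\tilde\Omega)}^2$, and hence $\|u\|_{H^1(\Omega_1)}\le C(1+k)\|u\|_{L^2(\tilde\Omega)}$. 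I note that simply dominating $\|u\|_{H^1}$ by $\|u\|_{H^2}$ from the next step would only give the weaker factor $(1+k^2)$, so the Caccioppoli estimate is exactly what produces the claimed $(1+k)$.

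For the $H^2$ bound I would invoke the standard interior $L^2$-regularity estimate for the Laplacian on the pair $\Omega\subset\subset\Omega_1$, namely
\[
\|u\|_{H^2(\Omega)}\le C\big(\|\Delta u\|_{L^2(\Omega_1)}+\|u\|_{L^2(\Omega_1)}\big),
\]
with $C$ depending only on $\Omega$, $\Omega_1$, and $n$ (this is \cite[Theorem 7.1]{Zworski_book}). Inserting the $L^2$ bound on $\Delta u$ obtained above gives $\|u\|_{H^2(\Omega)}\le C(1+k^2)\|u\|_{L^2(\tilde\Omega)}$, as claimed.

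The only genuine subtlety, and the main thing to watch, is that the final constants must be independent of $k$. This works precisely because the interior regularity of $-\Delta$ carries no $k$-dependence, so all dependence on $k$ is confined to the explicit source term $(q-k^2)u$; tracking the powers then yields $k^2$ for the $H^2$ estimate and, through the separate Caccioppoli step, the sharper $k^1$ for the $H^1$ estimate.
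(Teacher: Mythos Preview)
The paper does not actually prove this proposition: it merely states the result and refers to \cite[Theorem~7.1]{Zworski_book} for the interior elliptic regularity. Your argument is correct and supplies the details the paper omits; in particular, your separate Caccioppoli step is exactly what is needed to obtain the sharper factor $(1+k)$ in the $H^1$ estimate, which would not follow from simply dominating $\|u\|_{H^1}$ by $\|u\|_{H^2}$.

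One small point worth making explicit: the Caccioppoli integration by parts presupposes $u\in H^1_{\mathrm{loc}}(\tilde\Omega)$, whereas the hypothesis only gives $u\in L^2(\tilde\Omega)$. This is harmless, since the qualitative statement $u\in H^2_{\mathrm{loc}}(\tilde\Omega)$ already follows from $u\in L^2(\tilde\Omega)$ and $\Delta u=(q-k^2)u\in L^2(\tilde\Omega)$ via the same interior regularity you invoke later (or, equivalently, one can first run the $H^2$ step on an intermediate domain to obtain $u\in H^2(\Omega_1)$ qualitatively, and then apply the Caccioppoli argument for the sharp constant). Adding one sentence to this effect would make the argument fully self-contained.
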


\end{appendix}

\section*{Acknowledgements}
K.K. is very grateful to J\'er\^ome Le Rousseau and Luc Robbiano  for very helpful correspondence on Carleman estimates, and to J\'er\^ome Le Rousseau for bringing the reference \cite{Buffe_2017} to her attention. 
The research of K.K. is partially supported by the National Science Foundation (DMS 1500703, DMS 1815922). The research of G.U. is partially supported by NSF and a Si Yuan Professorship at IAS, HKUST. We are very grateful to the referees for their helpful comments.

\end{document}